\newcommand{\nnum}{\nonumber}
\newcommand{\EQ}{\begin{eqnarray}}
\newcommand{\EN}{\end{eqnarray}}
\newcommand{\EQQ}{\begin{eqnarray*}}
\newcommand{\ENN}{\end{eqnarray*}}
\newcommand{\bremark}{\begin{remark} \begin{rm} }
\newcommand{\eremark}{ \end{rm} \rule{1mm}{2mm}
\end{remark} }
\newcommand{\btheorem}{\begin{theorem} \begin{rm} }
\newcommand{\etheorem}{ \end{rm} \rule{1mm}{2mm}
\end{theorem} }
\newcommand{\blemma}{\begin{lemma} \begin{rm} }
\newcommand{\elemma}{ \end{rm} \rule{1mm}{2mm}
\end{lemma} }
\newcommand{\bcorollary}{\begin{corollary} \begin{rm} }
\newcommand{\ecorollary}{ \end{rm} \rule{1mm}{2mm}
\end{corollary} }
\newcommand{\bdefinition}{\begin{definition}\begin{rm} }
\newcommand{\edefinition}{ \end{rm} \rule{1mm}{2mm}
\end{definition} }
\newcommand{\bproposition}{\begin{proposition} \begin{rm} }
\newcommand{\eproposition}{ \end{rm} \rule{1mm}{2mm}
\end{proposition} }
\newcommand{\bexample}{\begin{example} \begin{rm} }
\newcommand{\eexample}{ \end{rm} \rule{1mm}{2mm}
\end{example} }
\newcommand{\basm}{\begin{assumption} \begin{rm}}
\newcommand{\easm}{\end{rm} 
\end{assumption}}
\newcommand{\real}{\mathds{R}}
\newcommand{\CC}{\mathcal{C}}
\newcommand{\EE}{\mathcal{E}}
\newcommand{\GG}{\mathcal{G}}
\newcommand{\HH}{\mathcal{H}}
\newcommand{\LL}{\mathcal{L}}
\newcommand{\NN}{\mathcal{N}}
\newcommand{\dist}{\operatorname{dist}}
\newcommand{\diag}[1]{\operatorname{diag}(#1)}
\newtheorem{theorem}{\bf Theorem}[section]
\newtheorem{lemma}{\bf Lemma}[section]
\newtheorem{definition}{\bf Definition}[section]
\newtheorem{remark}{\bf Remark}[section]
\newtheorem{corollary}{\bf Corollary}[section]
\newtheorem{proposition}{\bf Proposition}[section]
\newtheorem{example}{\bf Example}[section]
\newtheorem{assumption}{\bf Assumption}[section]
\newcommand\oprocendsymbol{\hbox{$\bullet$}}
\newcommand\oprocend{\relax\ifmmode\else\unskip\hfill\fi\oprocendsymbol}
\date{}
\begin{document}

\title{Real-time game theoretic coordination of competitive mobility-on-demand systems}

\author{Minghui Zhu and Emilio Frazzoli \thanks{The authors are
   with Laboratory for Information and Decision Systems, Massachusetts Institute of Technology, 77 Massachusetts Avenue, Cambridge, MA 02139, {\tt\small \{mhzhu,frazzoli\}@mit.edu}. This research was supported in part by the Future Urban Mobility project of the Singapore-MIT Alliance for Research and Technology (SMART) Center, with funding from Singapore's National Research Foundation.}}

\maketitle

\begin{abstract} This paper considers competitive mobility-on-demand systems where a group of vehicle sharing companies provide pickup-delivery service in populated areas. The companies, on one hand, want to collectively regulate the traffic of the user queueing network, and on the other hand, aim to maximize their own net profit at each time instant by increasing the user delivery and reducing the transition of empty vehicles. We formulate the strategic interconnection among the companies as a real-time game theoretic coordination problem. We propose an algorithm to achieve vehicle balance and practical regulation of the user queueing network. We quantify the relation between the regulation error and the system parameters (e.g., the maximum variation of the user arrival rates).
\end{abstract}

\section{Introduction}

Private automobiles are not a sustainable solution to personal mobility given their drawbacks of energy inefficiency, high greenhouse gas emissions and induced traffic congestion. The report~\cite{Schrank.Lomax.Eisele:11} shows that in 2010, traffic congestion caused an annual cost of $101$ billion, and drivers spent $4.8$ billion hours in traffic in United States. Mobility-on-demand (MoD) systems represent a promising paradigm for future urban mobility. In particular, MoD systems are one-way vehicle-sharing systems where vehicle-sharing companies provide sharing vehicles at stations in a geographic region of interest, and users drive or are driven by the vehicles from a pickup location to a drop-off location. Several pilot programs have empirically demonstrated that MoD systems are efficient in addressing the drawbacks of private automobiles. In MoD system, the arrivals and departures of users are uncorrelated, so it is important to real-time reallocate the vehicles to match the dynamic and spatial demands. In this paper, we focus on competitive MoD systems where multiple service suppliers compete with one another to maximize their own profits. The paper~\cite{Pavone.Smith.Frazzoli.Rus:12} instead studies the scenario where there is a single service supplier.

\emph{Literature review.} Networked resource allocation among competing users has been extensively studied in the context of Game Theory. In~\cite{Altman.Basar:98}, the authors exploit differential game theory to derive Nash equilibrium controllers for multiple self-interested users to regulate the traffic of a single queue. However, the approach in~\cite{Altman.Basar:98} is not applicable to our problem due to; e.g, the additional dynamics of vehicle queues, the nonlinearity and non-smoothness of dynamic systems and the presence of state and input constraints. Static games have also been widely used to synthesize decentralized schemes for resource allocation, and a necessarily incomplete reference list includes~\cite{Alpcan.Basar:05,Marden.Wierman:08,Pan.Pavel:09,Yin.Shanbhag.Mehta:11,Zhu.Martinez:12}. Another relevant problem is demand response in the emerging smart grid where customers manage their electricity consumption in response to supply conditions. Some references on the regard include~\cite{Chen.Li.Jiang.Low:12,Ma.Callaway.Hiskens:12}.

Our problem is also related to (open-loop) optimization and games in dynamic environments. In~\cite{Cavalcante.Rogers.Jennings.Yamada:11}, the authors study the problem of seeking the common global optimum of a sequence of time-varying functions. The papers~\cite{Chen.Lau:12,Chen.Lau.Chen:11} investigate resource allocation of communication systems over time-varying fading channels. The online convex optimization and games have been considered in the papers~\cite{Gordon.Greenwald.Marks:08,Zinkevich:03}.

Another set of papers relevant to our work is concerned with generalized Nash games. This class of continuous games are first formulated in~\cite{Arrow.Debreu:54}. Since then, a great effort has been dedicated to investigating the existence and structural properties of generalized Nash equilibria. An incomplete reference list includes the recent survey paper~\cite{Facchinei.Kanzow:07} and~\cite{Basar.Olsder:82,Facchinei.Pang:03,Rosen:65}. There have been several algorithms proposed to compute generalized Nash equilibria, including ODE-based methods~\cite{Rosen:65}, nonlinear Gauss-Seidel-type approaches~\cite{Pang.Scutari.Facchinei.Wang:08}, iterative primal-dual Tikhonov schemes~\cite{Yin.Shanbhag.Mehta:11} and best-response dynamics~\cite{Palomar.Eldar:10}. In our recent paper~\cite{Zhu.Frazzoli:12}, we consider distributed computation of generalized Nash equilibria over unreliable communication networks.

\emph{Contributions.} In this paper, we present a model of competitive MoD systems and formulate the problem of real-time game theoretic coordination among multiple players (i.e., vehicle sharing companies). In particular, each player wants to collectively regulate the traffic of the user queueing network through delivering the users to their destinations. On the other hand, each player aims to maximize his net profit at each time instant by increasing the user delivery and reducing the transition of empty vehicles. We propose an algorithm to achieve vehicle balance and practical regulation of the user queueing network. The closed-loop system consists of a feedback connection of the cyber and physical layers: in the cyber layer, the players seek instantaneous Nash equilibrium in a distributed fashion; the intermediate estimates of Nash equilibrium are employed to control the physical queueing networks after a proper projection; the states of the queueing networks are injected into the cyber layer to keep track of Nash equilibrium. We quantify the relation between the regulation error and the system parameters (i.e., the maximum variation of the user arrival rates). For ease of presentation, the notations of Sections~\ref{sec:preliminaries} and~\ref{sec:main} will be introduced and summarized in the Appendix.


\section{Problem formulation}\label{sec:formulation}

In this section, we will provide a model for competitive MoD systems and introduce the problem formulation considered in the paper. Basic notations used in this section are summarized in Table~\ref{ta:basic}.

\begin{table}[ht]
\caption{Basic notations}
\medskip
\centering
\begin{tabular}{|c|l|}
  \hline
  $c_{\kappa}(t)$ & \text{user arrival rate at station~$\kappa$}
\tabularnewline
  \hline
  $v^{[i]}_{\kappa}(t)$ & \text{number of vehicles of player~$i$ at station~$\kappa$}
\tabularnewline
  \hline
  $\beta^{[i]}_{\kappa}(t)$ & \text{delivery rate of player~$i$ at station~$\kappa$}
\tabularnewline
  \hline
  $\alpha^{[i]}_{\kappa\kappa'}(t)$ & \text{transfer rate of empty vehicles of player~$i$}
\tabularnewline
  \hline
  $Q_{\kappa}(t)$ & \text{queue length of station~$\kappa$}
\tabularnewline
  \hline
  $u_{\kappa}(t)$ & \text{controller of station~$\kappa$}
\tabularnewline
  \hline
  $\textbf{1}$ & \text{indicator function}
\tabularnewline
  \hline
  $\mathcal{B}_i$ & \text{profit function of player~$i$}
\tabularnewline
  \hline
  $\mathcal{C}_i$ & \text{cost function of player~$i$}
\tabularnewline
  \hline
\end{tabular}\label{ta:basic}
\end{table}

\subsection{Model}

A competitive MoD system consists of three interconnected networks: the user queueing network, the vehicle queueing network and the player network. Figure~\ref{fig_system} shows the architecture of the system.

\begin{figure}[h]
  \centering
  \includegraphics[width=.8\linewidth]{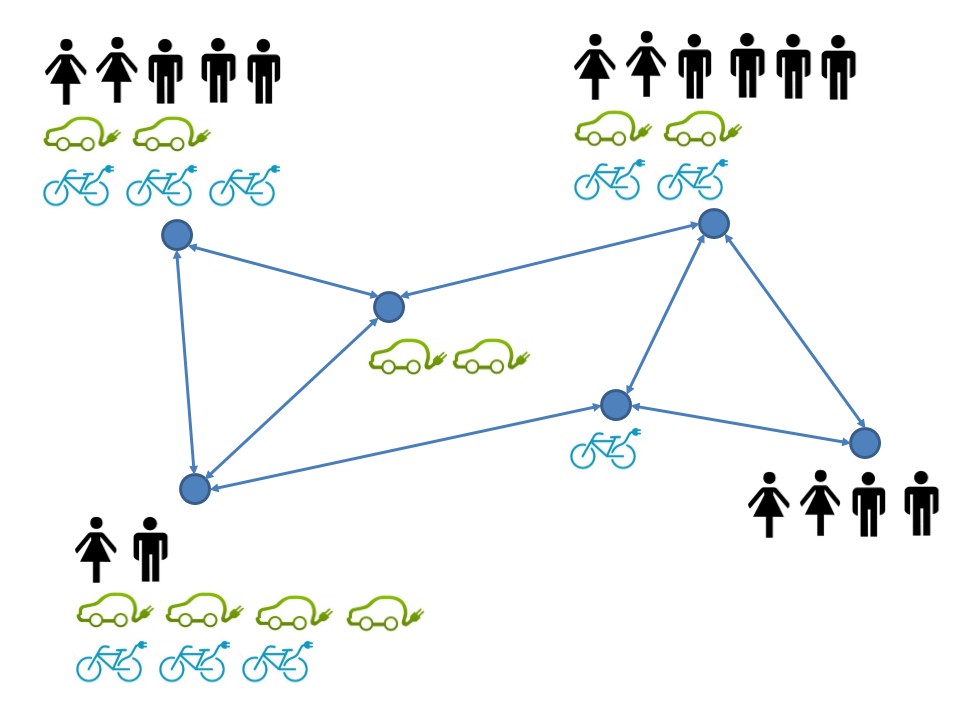}
  \caption{A competitive MoD system where the companies of shared electrical vehicles and bicycles serve the area} \label{fig_system}
\end{figure}

\subsubsection{The user queueing network}

There is a set of stations, say $\mathbb{S}$, in a spatial area of interest, and the interconnection of the stations is characterized by the graph $\GG_{\mathbb{S}} \triangleq \{\mathbb{S},\EE_{\mathbb{S}}\}$ where $(\kappa,\kappa')\in\EE_{\mathbb{S}}\setminus\diag{\mathbb{S}}$ if and only if the users at station~$\kappa$ can be delivered to station~$\kappa'$. The graph $\GG_{\mathbb{S}}$ is fixed, undirected and connected. Denote by $\NN_{\kappa} \triangleq \{\kappa'\in\mathbb{S} \; | \; (\kappa,\kappa')\in\EE_{\mathbb{S}}\}$ the set of neighboring stations of station~$\kappa$.

Users arrive at station $\kappa\in\mathbb{S}$ in a dynamic fashion. Let $c_{\kappa}(t)\in\real_{\geq0}$ be the user arrival rate at station $\kappa$ at time~$t$, and its temporal evolution is governed by the following ordinary differential equation: \begin{align} \dot{c}_{\kappa}(t) = h_{\kappa}(c_{\kappa}(t),Q_{\kappa}(t),t).\label{e11}
\end{align} In~\eqref{e11}, $Q_{\kappa}(t)$ is the queue length of station~$\kappa$ and will be defined later. The function $h_{\kappa} : \real^3_{\geq0}\rightarrow \real_{\geq0}$ is locally Lipschitz in $(c_{\kappa}(t),Q_{\kappa}(t))$ and piecewise continuous in $t$. Let $a_{\kappa\kappa'}(t)\in[0,1]$ be the fraction of users who arrive at station~$\kappa$ at time~$t$ and want to reach station~$\kappa'\neq\kappa$. Thus $\sum_{\kappa'\in\NN_{\kappa}}a_{\kappa\kappa'}(t) = 1$. We assume that the fraction $a_{\kappa\kappa'}(t)$ is fixed; i.e., $a_{\kappa\kappa'}(t) = a_{\kappa\kappa'}$ for $t\geq0$.

A queue is associated with each station $\kappa \in \mathbb{S}$, and the arrived users wait for the delivery in the queue. Let $Q_{\kappa}(t)\in\real_{\geq0}$ be the queue length of station $\kappa \in \mathbb{S}$ at time~$t$, and it dynamics is given by:
\begin{align} \dot{Q}_{\kappa}(t) = (c_{\kappa}(t) - u_{\kappa}(t))\mathbf{1}_{[Q_{\kappa}(t)\geq0]},\label{e10}
\end{align} where the initial state $Q_{\kappa}(0) > 0$, and the quantity $u_{\kappa}(t) = \sum_{i\in V}\beta^{[i]}_{\kappa}(t)\in\real_{\geq0}$ where $\beta^{[i]}_{\kappa}(t)$ is the delivery rate of player~$i$ at station~$\kappa$ and $V \triangleq \{1,\cdots,N\}$ is the set of players explained later.

Denote the vectors $Q \triangleq (Q_{\kappa})_{\kappa\in\mathbb{S}}$, $c \triangleq (c_{\kappa})_{\kappa\in\mathbb{S}}$, $\xi\triangleq (Q,c)$ and $u \triangleq (u_{\kappa})_{\kappa\in\mathbb{S}}$. We then rewrite~\eqref{e11} and~\eqref{e10} into the following compact form: \begin{align}\dot{Q}(t) &= h_Q(c(t),u(t)),\label{e18}\\
\dot{c}(t) &= h_c(c(t),Q(t),t).\label{e14}\end{align}
and then, \begin{align}\dot{\xi}(t) &= h_{\xi}(\xi(t),u(t),t).\label{e15}\end{align}

\subsubsection{The vehicle queueing network}

There is a group of players $V \triangleq \{1,\cdots,N\}$. Each player is a vehicle-sharing company, and he provides the service of delivering the users on the graph $\mathcal{G}_{\mathbb{S}}$. Let $v^{[i]}_{\kappa}(t)\in\real_{\geq0}$ be the number of vehicles of player~$i$ stored at station~$\kappa$ at time~$t$. If $v^{[i]}_{\kappa}(t) > 0$, then player~$i$ is able to deliver the users leaving station~$\kappa$ at a rate $\beta^{[i]}_{\kappa}(t)\in[a,\beta^{[i]}_{\max}-a]$ with $0<a<\frac{1}{2}\beta^{[i]}_{\max}$\footnote{The quantity $a>0$ could be chosen arbitrarily small.}; otherwise, player~$i$ cannot deliver any user; i.e., $\beta^{[i]}_{\kappa}(t) = 0$. In order to avoid $v^{[i]}_{\kappa}(t)$ becoming zero, each player~$i$ needs to reallocate his empty vehicles. Let $\alpha^{[i]}_{\kappa\kappa'}(t)\in[a,\alpha^{[i]}_{\max}-a]$ with $0<a<\frac{1}{2}\alpha^{[i]}_{\max}$ be the rate that the empty vehicles of player~$i$ are transferred from station~$\kappa$ to station~$\kappa'$ at time~$t$. The dynamics of $v^{[i]}_{\kappa}(t)$ is based on the mass-conservation law and given by: \begin{align}\dot{v}^{[i]}_{\kappa}(t) &= \{-\beta^{[i]}_{\kappa}(t) + \sum_{\kappa'\in\NN_{\kappa}}a_{\kappa'\kappa}\beta^{[i]}_{\kappa'}(t)\nnum\\
&- \sum_{\kappa'\in\NN_{\kappa}}\alpha^{[i]}_{\kappa\kappa'}(t) + \sum_{\kappa'\in\NN_{\kappa}}\alpha^{[i]}_{\kappa'\kappa}(t)\}\textbf{1}_{[v^{[i]}_{\kappa}(t)>0]}\nnum\\
&+ \{\sum_{\kappa'\in\NN_{\kappa}}a_{\kappa'\kappa}\beta^{[i]}_{\kappa'}(t) + \sum_{\kappa'\in\NN_{\kappa}}\alpha^{[i]}_{\kappa'\kappa}(t)\}\textbf{1}_{[v^{[i]}_{\kappa}(t)=0]},\label{e6}\end{align} with the initial state $v^{[i]}_{\kappa}(0) > 0$. It is easy to verify that $v^{[i]}_{\kappa}(t)\geq0$ for all $t\geq0$.

\subsubsection{The player network}

Each player in $V$ has three partially conflicting objectives: the first one is to collectively regulate the queue length $Q_{\kappa}(t)$ to the desired level $\bar{Q}_{\kappa}\in\real_{>0}$, the second one is to maintain $v^{[i]}_{\kappa}(t)$ to be strictly positive, and the third one is to maximize his own net profits. In the sequel, we will explain each objective in more detail.

Firstly, players aim to collectively regulate the queue length $Q_{\kappa}(t)$ to the desired level $\bar{Q}_{\kappa}$. For the time being, we assume that there exists a smooth controller $U(\xi(t))\triangleq (U_{\kappa}(\xi_{\kappa}(t)))_{\kappa\in \mathbb{S}}$ which is able to achieve the queue regulation. Hence, players share a common goal of enforcing the controllers $u_{\kappa}(t) = U_{\kappa}(\xi_{\kappa}(t))$ for all $\kappa\in \mathbb{S}$.

Secondly, each player~$i$ wants to maintain $v^{[i]}_{\kappa}(t) > 0$ in order to sustain a non-trivial service rate $\beta^{[i]}_{\kappa}(t) > 0$. Since $v^{[i]}_{\kappa}(0) > 0$, player~$i$ can achieve this goal through simply keeping $v^{[i]}_{\kappa}(t)$ as a constant; i.e., enforcing the hard constraint $\dot{v}^{[i]}_{\kappa}(t) = 0$ all the time.

Thirdly, each player is self-interested and desires to maximize his net profit at each time instant. In particular, each player~$i$ is able to make a profit from the delivery service, and the profit is modeled by $\mathcal{B}_i(\beta^{[i]}_{\kappa}(t))$ where the function $\mathcal{B}_i : \real_{\geq0}\rightarrow\real_{\geq0}$ is smooth and strongly concave with constant $\rho_i > 0$. On the one hand, the transfer of empty vehicles is costly, and the expense is modeled by $\mathcal{C}_i(\alpha^{[i]}_{\kappa\kappa'}(t))$ where $\mathcal{C}_i : \real_{\geq0}\rightarrow\real_{\geq0}$ is smooth and strongly convex with constant $\rho'_i > 0$. The net cost of player $i$ at time~$t$ is abstracted by  $\sum_{(\kappa,\kappa')\in\EE_{\mathbb{S}}}\CC_i(\alpha^{[i]}_{\kappa\kappa'}(t)) - \sum_{\kappa\in\mathbb{S}}\mathcal{B}_i(\beta^{[i]}_{\kappa}(t))$.

The decision vector of player~$i$ at time~$t$ is given by $z^{[i]}(t)$ which is the collection of $\alpha^{[i]}(t)\triangleq(\alpha^{[i]}_{\kappa\kappa'}(t))_{(\kappa,\kappa')\in\EE_{\mathbb{S}}}$ and $\beta^{[i]}(t)\triangleq(\beta^{[i]}_{\kappa}(t))_{\kappa\in\mathbb{S}}$. The above three interests of player~$i$ at time~$t$ are compactly expressed by the following convex program parameterized by the vector $U(\xi(t))$:
\begin{align}&\min_{z^{[i]}(t)\in \real^{n_i}}\;\; \sum_{(\kappa,\kappa')\in\EE_{\mathbb{S}}}\CC_i(\alpha^{[i]}_{\kappa\kappa'}(t)) - \sum_{\kappa\in\mathbb{S}}\mathcal{B}_i(\beta^{[i]}_{\kappa}(t)),\nnum\\
&{\rm s.t.} \quad \sum_{i\in V}\beta^{[i]}_{\kappa}(t) = U_{\kappa}(\xi_{\kappa}(t)),\quad \kappa\in\mathbb{S},\nnum\\
&\quad\quad -\beta^{[i]}_{\kappa}(t) + \sum_{\kappa'\in\NN_{\kappa}}a_{\kappa'\kappa}\beta^{[i]}_{\kappa'}(t) - \sum_{\kappa'\in\NN_{\kappa}}\alpha^{[i]}_{\kappa\kappa'}(t)\nnum\\
&\quad\quad + \sum_{\kappa'\in\NN_{\kappa}}\alpha^{[i]}_{\kappa'\kappa}(t) = 0,\quad \kappa\in\mathbb{S},\nnum\\
&\quad\quad z^{[i]}(t) \in Z_i,\label{e12}\end{align} where the dimension $n_i = |\mathbb{S}| + 2|\mathcal{E}_{\mathbb{S}}|$ and the set $Z_i$ is defined as: $Z_i \triangleq \{z^{[i]}\;|\;\beta^{[i]}_{\kappa}\in[a,\beta^{[i]}_{\max}-a],\quad \kappa\in\mathbb{S},\quad\alpha^{[i]}_{\kappa\kappa'}\in[a,\alpha^{[i]}_{\max}-a],\quad (\kappa,\kappa')\in\EE_{\mathbb{S}}\}$. In~\eqref{e12}, the decisions of the players are coupled via the constraint $\sum_{i\in V}\beta^{[i]}_{\kappa}(t) = U_{\kappa}(\xi_{\kappa}(t))$ which represents the common goal of the queue regulation. Other components in~\eqref{e12} are instead separable.

By using $v(x)=0$ if and only if $v(x)\leq0$ and $v(x)\geq0$, we rewrite the parametric convex program~\eqref{e12} into the following compact form: \begin{align}&\min_{z^{[i]}(t)\in \real^{n_i}}f_i(z^{[i]}(t)),\nnum\\
&{\rm s.t.} \quad G(\beta^{[i]}(t),\beta^{[-i]}(t),U(\xi(t))) \leq 0,\nnum\\
&\quad\quad\; h^{[i]}(z^{[i]}(t)) \leq 0,\quad z^{[i]}(t) \in Z_i,\label{obj-fun}\end{align} where $G : \real^{N |\mathbb{S}|}\rightarrow\real^m$ ($m = 2|\mathbb{S}|$) and $h^{[i]} : \real^{n_i}\rightarrow\real^{p}$ ($p = 2|\mathbb{S}|$) are affine functions. The components of $G$ and $h^{[i]}$ are asymmetric; i.e., $G_{\ell} = - G_{\ell+|\mathbb{S}|}$ and $h^{[i]}_{\ell} = - h^{[i]}_{\ell+|\mathbb{S}|}$ for $1 \leq \ell \leq |\mathbb{S}|$. The collection of~\eqref{obj-fun} will be referred to as the CVX game parameterized by $U(\xi(t))$, and its solution, Nash equilibrium, is defined as follows: \begin{definition} For the CVX game parameterized by $U(\xi(t))$, the state $\tilde{z}(t)\in Z\triangleq \prod_{i\in V}Z_i$ is a Nash equilibrium if and only if:
\begin{enumerate}
\item[(1)] $G(\tilde{\beta}(t),U(\xi(t)))\leq0$ and $h^{[i]}(\tilde{z}^{[i]}(t))\leq0$;
\item[(2)] for any $z^{[i]}\in Z_i$ with $G(\beta^{[i]},\tilde{\beta}^{[-i]}(t),U(\xi(t)))\leq0$ and $h^{[i]}(z^{[i]})\leq0$, it holds that $f_i(\tilde{z}^{[i]}(t)) \leq f_i(z^{[i]})$.
\end{enumerate}\label{def3}\end{definition}

The set of Nash equilibria is denoted by $\mathbb{X}_{\rm C}(\xi(t))$. Since $f_i$ is strongly convex and separable, the map of partial gradients is strongly monotone, and thus $\mathbb{X}_{\rm C}(\xi(t))$ is non-empty; e.g., in~\cite{Facchinei.Kanzow:07}.

\subsection{Our objective}

At each time instant~$t$, the players aim to solve the CVX game parameterized by the control command $U(\xi(t))$, and implement a Nash equilibrium in $\mathbb{X}_{\rm C}(\xi(t))$. This procedure is repeated at the next time instant by shifting the time horizon forward. We term the collection of these finite-horizon games over the infinite horizon as a \emph{real-time game}. In this paper, we will design an algorithm to update $z(t)$ such that real-time game theoretic coordination is achieved; that is, \begin{align}&\lim_{t\rightarrow+\infty}\dist(z(t),\mathbb{X}_{\rm C}(\xi(t))) = 0,\nnum\\
&\lim_{t\rightarrow+\infty}Q_{\kappa}(t) = \bar{Q}_{\kappa},\quad \kappa\in \mathbb{S},\nnum\\
&v^{[i]}_{\kappa}(t) = v^{[i]}_{\kappa}(0),\quad \kappa\in \mathbb{S},\quad i\in V,\quad t\geq0.\label{e20}\end{align}

\begin{remark} In contrast to~\cite{Altman.Basar:98,Basar.Olsder:82}, our real-time game theoretic coordination formulation relaxes the computation of infinite-horizon Nash equilibrium. Instead, our formulation aims to real-time seek the collection of instantaneous Nash equilibrium. By Lemma~\ref{lem3}, one can see that if $z(t)$ asymptotically achieves $\mathbb{X}_{\rm C}(\xi(t))$, then the infinite-horizon average performance of $z(t)$ is identical to that of $\mathbb{X}_{\rm C}(\xi(t))$. More importantly, the formulation allows us to handle constrained discontinuous dynamic systems and relax the \emph{a priori} information of the arrival rates over the infinite horizon.

Our game formulation is partially motivated by receding-horizon control or model predictive control; e.g., in~\cite{Mayne.Rawlins.Rao.Scokaert:00}, whose control laws are based on solving a sequence of finite horizon optimal control problems. Our game formulation is also partially inspired by optimization and games in dynamic environments; e.g., in~\cite{Cavalcante.Rogers.Jennings.Yamada:11,Chen.Lau:12,Gordon.Greenwald.Marks:08}. However, this set of papers only consider open-loop decision making.\oprocend\label{rem5}
\end{remark}

\subsection{Assumptions}

%
%

Let $\beta_{\max}\triangleq\sum_{i\in V}\beta^{[i]}_{\max}$, $\alpha_{\max}\triangleq\sum_{i\in V}\alpha^{[i]}_{\max}$. In the remainder of this paper, we suppose that the following set of assumptions hold.

\begin{assumption} It holds that $\beta^{[i]}_{\max}$ are identical for all $i\in V$ and $c_{\kappa}(t)\in[c_{\min},c_{\max}]$ for all $t\geq0$ and $\kappa\in\mathbb{S}$. In addition, $2Na \leq c_{\min} < c_{\max} < \beta_{\max}-Na$.\label{asm1}
\end{assumption}

\begin{assumption} There is $\delta_c>0$ such that $\|\dot{c}_{\kappa}(t)\|\leq\delta_c$ for all $\kappa$ and $t\geq0$.\label{asm7}
\end{assumption}

\begin{assumption} For any $\beta^{[i]}$ with $\beta^{[i]}_{\kappa}\in[a,\beta^{[i]}_{\max}-a]$, there is $\alpha^{[i]}$ such that $\alpha^{[i]}_{\kappa\kappa'}\in[a,\alpha^{[i]}_{\max}-a]$ and the following holds for $\kappa\in\mathbb{S}$: \begin{align*}-\beta^{[i]}_{\kappa} + \sum_{\kappa'\in\NN_{\kappa}}a_{\kappa'\kappa}\beta^{[i]}_{\kappa'} - \sum_{\kappa'\in\NN_{\kappa}}\alpha^{[i]}_{\kappa\kappa'}
+ \sum_{\kappa'\in\NN_{\kappa}}\alpha^{[i]}_{\kappa'\kappa} = 0.\end{align*}\label{asm4}
\end{assumption}

Assumption~\ref{asm1} requires that the maximum delivery rate is larger than the maximum arrival rate. This assumption is necessary for the queue stabilization. Assumption~\ref{asm7} means that the variations of the arrival rates are bounded. The combination of Assumptions~\ref{asm1} and~\ref{asm7} implies that there is $\delta_{\xi}>0$ such that $\|\dot{\xi}(t)\|\leq \delta_{\xi}$ for all $t\geq0$. Assumption~\ref{asm4} implies that given any feasible delivery vector $\beta^{[i]}$, each player~$i$ is able to maintain the vehicle balance at different stations. Since $\mathcal{G}_{\mathbb{S}}$ is undirected, this assumption requires that $\alpha^{[i]}_{\max}$ is large enough in comparison with $\beta^{[i]}_{\max}$.

\section{Preliminaries}\label{sec:preliminaries}

In the sequel, we will first introduce an approximation of the CVX game parameterized by $\zeta(t)\triangleq U(\xi(t))$, namely, the regularized game. In order to simplify the notations, we will drop the dependency of $\zeta(t)$ on time~$t$. We will then characterize the distance between the CVX game and the regularized game. After this, we will perform sensitivity analysis on the regularized game.

\subsection{The existence of smooth controllers}

With Assumption~\ref{asm1}, we will show that the regulation of user queues can be achieved via the following smooth controller: \begin{align}&u_{\kappa}(t) = U_{\kappa}(\xi_{\kappa}(t)) = c_{\kappa}(t) - \hat{U}_{\kappa}(Q_{\kappa}(t))\nnum\\
&\triangleq c_{\kappa}(t)-\frac{c_{\min}}{2}
+\frac{\beta_{\max}-c_{\max}
+\frac{c_{\min}}{2}-Na}{1+2\frac{\beta_{\max}-c_{\max}-Na}{c_{\min}}e^{-(Q_{\kappa}(t)-\bar{Q}_{\kappa})}}.\label{e21}\end{align} Towards this end, it is easy to verify that $\hat{U}_{\kappa}(\bar{Q}_{\kappa}) = c_{\kappa}(t)$ and $U_{\kappa}(\xi_{\kappa}(t))\in[\frac{c_{\min}}{2},\beta_{\max}-Na]\subseteq[Na,\beta_{\max}-Na]$ by utilizing the monotonicity of the functions in $\hat{U}_{\kappa}$. Hence, the controller $U_{\kappa}(\xi_{\kappa}(t))$ is realizable for the players. Furthermore, the Lie derivative of the regulation error $\frac{1}{2}(Q_{\kappa}(t)-\bar{Q}_{\kappa})^2$ along~\eqref{e10} for $Q_{\kappa}(t)\geq0$ is given by: \begin{align}&\frac{1}{2}\frac{d}{dt}(Q_{\kappa}(t)-\bar{Q}_{\kappa})^2= (Q_{\kappa}(t)-\bar{Q}_{\kappa})\hat{U}_{\kappa}(Q_{\kappa}(t))\nnum\\
&=\frac{\beta_{\max}-c_{\max}-Na}{1+2\frac{\beta_{\max}-c_{\max}-Na}{c_{\min}}e^{-(Q_{\kappa}(t)-\bar{Q}_{\kappa})}}\nnum\\
&\times(Q_{\kappa}(t)-\bar{Q}_{\kappa})(e^{-(Q_{\kappa}(t)-\bar{Q}_{\kappa})}-1).\label{e17}\end{align}
So, $(Q_{\kappa}(t)-\bar{Q}_{\kappa})\hat{U}_{\kappa}(Q_{\kappa}(t)) < 0$ for $Q_{\kappa}(t)-\bar{Q}_{\kappa}\neq0$ and $Q_{\kappa}(t)\geq0$. Hence, $U_{\kappa}(\xi_{\kappa}(t))$ is able to regulate $Q_{\kappa}(t)$ to $\bar{Q}_{\kappa}$ from any initial state $Q_{\kappa}(0)>0$. This controller will be used in the remainder of the paper. In the sequel, we will find uniform upper bounds on $\|\frac{d U(\xi)}{d \xi}\|$ and $\|\frac{d^2 U(\xi)}{d \xi^2}\|$.
\begin{lemma}The following holds for all $\xi\geq0$: \begin{align}&\|\frac{d U(\xi)}{d \xi}\|\leq D_U^{(1)},\quad\|\frac{d^2 U(\xi)}{d \xi^2}\|\leq D_U^{(2)}.\label{e74}\end{align}\label{lem2}
\end{lemma}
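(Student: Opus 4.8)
The plan is to exploit the fully decoupled structure of the controller $U$. Since each component $U_\kappa$ in~\eqref{e21} depends only on the pair $(Q_\kappa, c_\kappa)$ and not on the states of any other station, writing $\xi = (Q, c)$ the Jacobian $\frac{dU}{d\xi}$ is highly sparse: each row $\kappa$ has exactly two nonzero entries, namely $\frac{\partial U_\kappa}{\partial Q_\kappa}$ and $\frac{\partial U_\kappa}{\partial c_\kappa}$, and each column has at most one nonzero. Likewise, for each component $U_\kappa$ the Hessian $\frac{d^2 U_\kappa}{d\xi^2}$ has at most the single nonzero second partial $\frac{\partial^2 U_\kappa}{\partial Q_\kappa^2}$, since the dependence on $c_\kappa$ is affine. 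Thus the whole problem reduces to bounding a handful of scalar partial derivatives uniformly in $(Q_\kappa, c_\kappa)$, after which the matrix and tensor norms follow from the sparsity pattern.

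First I would isolate the logistic structure hidden in~\eqref{e21}. Setting $A \triangleq \beta_{\max} - c_{\max} + \frac{c_{\min}}{2} - Na$ and $k \triangleq 2\frac{\beta_{\max}-c_{\max}-Na}{c_{\min}}$, Assumption~\ref{asm1} guarantees $\beta_{\max} - c_{\max} - Na > 0$, hence $A > 0$ and $k > 0$, and the nonconstant part of $U_\kappa$ equals $A\, g_\kappa$ where $g_\kappa \triangleq (1 + k e^{-(Q_\kappa - \bar{Q}_\kappa)})^{-1}$ is a standard logistic sigmoid taking values in $(0,1)$. A direct computation then gives $\frac{\partial U_\kappa}{\partial c_\kappa} = 1$, $\frac{\partial U_\kappa}{\partial Q_\kappa} = A\, g_\kappa(1 - g_\kappa)$, and $\frac{\partial^2 U_\kappa}{\partial Q_\kappa^2} = A\, g_\kappa(1 - g_\kappa)(1 - 2 g_\kappa)$, using the logistic identities $g_\kappa' = g_\kappa(1 - g_\kappa)$ and $g_\kappa'' = g_\kappa(1 - g_\kappa)(1 - 2 g_\kappa)$.

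Next I would bound these scalars using only that $g_\kappa \in (0,1)$. The quadratic $g(1-g)$ attains its maximum $\tfrac14$ at $g = \tfrac12$, and the cubic $|g(1-g)(1 - 2g)|$ is bounded above by a universal constant on $[0,1]$; hence $|\frac{\partial U_\kappa}{\partial Q_\kappa}| \leq A/4$ and $|\frac{\partial^2 U_\kappa}{\partial Q_\kappa^2}|$ is bounded by a constant multiple of $A$, uniformly over all $Q_\kappa \in \real$, so in particular over $\xi \geq 0$. Finally I would assemble these entrywise bounds into the claimed norm bounds: because each row of $\frac{dU}{d\xi}$ has at most two bounded entries and each column at most one, the inequality $\|M\|_2 \leq \sqrt{\|M\|_1 \|M\|_\infty}$ yields a value of $D_U^{(1)}$ depending only on $A$, and the analogous sparsity argument for the Hessian tensor yields $D_U^{(2)}$; both are independent of $\xi$ and of $|\mathbb{S}|$.

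The statement has no genuine obstacle; it is a direct differentiation followed by elementary bounds on a logistic function. The only points requiring care are the bookkeeping that converts the entrywise scalar bounds into norm bounds on the sparse Jacobian and Hessian, and the verification via Assumption~\ref{asm1} that $A > 0$, so that the denominators in~\eqref{e21} never vanish and all quantities remain smooth and well defined for every $\xi$.
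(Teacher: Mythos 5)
Your proof is correct, and its skeleton is the same as the paper's: differentiate \eqref{e21} componentwise, note $\partial U_{\kappa}/\partial c_{\kappa}=1$ and that only $\partial U_{\kappa}/\partial Q_{\kappa}$ and $\partial^2 U_{\kappa}/\partial Q_{\kappa}^2$ survive, then assemble norm bounds. Where you genuinely differ is the bounding step. The paper estimates the exponential factor via $e^{-(Q_{\kappa}-\bar{Q}_{\kappa})}\in[0,e^{\bar{Q}_{\kappa}}]$ --- this is where its hypothesis $\xi\geq0$ enters --- so its constants $D_U^{(1)},D_U^{(2)}$ (Appendix) carry factors $e^{\max_{\kappa}\bar{Q}_{\kappa}}$ and $|\mathbb{S}|$. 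You instead observe that the nonconstant part of $U_{\kappa}$ is $A\,\sigma(Q_{\kappa}-\bar{Q}_{\kappa}-\ln k)$ for the logistic sigmoid $\sigma$, and the identities $g'=g(1-g)$, $g''=g(1-g)(1-2g)$ with $g\in(0,1)$ give $|\partial U_{\kappa}/\partial Q_{\kappa}|\leq A/4$ and $|\partial^2 U_{\kappa}/\partial Q_{\kappa}^2|\leq A/(6\sqrt{3})$, uniformly over \emph{all} $Q_{\kappa}\in\real$, not merely $\xi\geq0$, and independently of $\bar{Q}_{\kappa}$; your sparsity bookkeeping (one nonzero per column, so $\|M\|_2\leq\sqrt{\|M\|_1\|M\|_\infty}$, or equivalently the Jacobian is a direct sum of $1\times2$ blocks) also removes the $|\mathbb{S}|$ inflation. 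This buys more than elegance: the paper's appendix constant evaluates the numerator of $\tfrac{Ake^{-(Q_{\kappa}-\bar{Q}_{\kappa})}}{(1+ke^{-(Q_{\kappa}-\bar{Q}_{\kappa})})^2}$ at $e^{\bar{Q}_{\kappa}}$ but the denominator at the point $e^{-(Q_{\kappa}-\bar{Q}_{\kappa})}=1$, and such a mixed substitution is not a simultaneous bound of the ratio in all parameter regimes (e.g., for $k$ large and $\bar{Q}_{\kappa}$ small the true supremum $A/4$, attained at $Q_{\kappa}=\bar{Q}_{\kappa}+\ln k\geq0$, can exceed it), whereas your estimate $\sup_{s>0}\,s/(1+s)^2=1/4$ is exact. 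Since the lemma asserts the existence of uniform bounds, with the specific constants deferred to the appendix, your tighter $\bar{Q}$- and dimension-free constants fully establish the statement, and your verification via Assumption~\ref{asm1} that $A>0$, $k>0$ (so the denominators never vanish) is the right prerequisite check.
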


\begin{proof} Notice that \begin{align}\frac{d U_{\kappa}(\xi_{\kappa})}{d Q_{\kappa}} &= \frac{\beta_{\max}-c_{\max}+\frac{c_{\min}}{2}-Na}{(1+2\frac{\beta_{\max}-c_{\max}-Na}{c_{\min}}e^{-(Q_{\kappa}-\bar{Q}_{\kappa})})^2}\nnum\\
&\times \frac{(-2)(\beta_{\max}-c_{\max}-Na)}{c_{\min}}e^{-(Q_{\kappa}-\bar{Q}_{\kappa})},\nnum\\
\frac{d^2 U_{\kappa}(\xi_{\kappa})}{d Q_{\kappa}^2} &= \frac{(\beta_{\max}-c_{\max}+\frac{c_{\min}}{2}-Na)\frac{2(\beta_{\max}-c_{\max}-Na)}{c_{\min}}}{(1+2\frac{\beta_{\max}-c_{\max}-Na}{c_{\min}}e^{-(Q_{\kappa}-\bar{Q}_{\kappa})})^4}\nnum\\
&\times(1-(\frac{2(\beta_{\max}-c_{\max}-Na)}{c_{\min}}e^{-(Q_{\kappa}-\bar{Q}_{\kappa})})^2)\nnum\\
&\times e^{-(Q_{\kappa}-\bar{Q}_{\kappa})},\nnum\\
\frac{d U_{\kappa}(\xi_{\kappa})}{d c_{\kappa}} &= 1.\nnum\end{align} The above relations in conjunction with $e^{-(Q_{\kappa}-\bar{Q}_{\kappa})}\in[0,e^{\bar{Q}_{\kappa}}]$ establish the desired bounds.\end{proof}

\subsection{The regularized game}

\subsubsection{Regularized Lagrangian functions}

To relax the constraints of $G(\beta,\zeta)\leq0$ and $h^{[i]}(z^{[i]})\leq0$, we define the following \emph{regularized} Lagrangian function for player~$i$: \begin{align}&\LL_i(z,\mu,\lambda^{[i]},\zeta) = f_i(z^{[i]}) + \langle\mu, G(\beta,\zeta)\rangle + \langle\lambda^{[i]},h^{[i]}(z^{[i]})\rangle\nnum\\
&-\tau\sum_{\kappa\in \mathbb{S}}\big(\psi(\beta^{[i]}_{\kappa})+\psi(\beta^{[i]}_{\max}-\beta^{[i]}_{\kappa})\big)\nnum\\ &-\tau\sum_{(\kappa,\kappa')\in \mathcal{E}_{\mathbb{S}}}\big(\psi(\alpha^{[i]}_{\kappa\kappa'})
+\psi(\alpha^{[i]}_{\max}-\alpha^{[i]}_{\kappa\kappa'})\big)\nnum\\
&- \frac{\epsilon}{2}\|\mu\|^2 - \frac{\epsilon}{2}\|\lambda^{[i]}\|^2 + \tau \sum_{\ell=1}^m\psi(\mu_{\ell}) + \tau \sum_{\ell=1}^{p}\psi(\lambda^{[i]}_{\ell}),\label{e16}\end{align} with $\epsilon > 0$, $\tau > 0$ and $\mu\in\real^m$ and $\lambda^{[i]}\in\real^{p}$ are dual multipliers. The function $\psi$ is the logarithmic barrier function and defined as follows:
\begin{align*}\left\{\begin{array}{ll}
                \psi(s) = \log (\frac{s}{a}), & s > 0, \\
                \psi(s) = -\infty, & s \leq 0.
              \end{array}\right.
\end{align*} Note that $\psi$ is concave and monotonically increasing over $\real_{>0}$. In $\LL_i$, the hard constraints $\mu_{\ell}\geq0$, $\lambda^{[i]}_{\ell}\geq0$ and $z^{[i]}\in Z_i$ are relaxed by those defined via the logarithmic function. In addition, the terms associated with $\epsilon$ play a role of regularization as shown in Lemma~\ref{lem1}.

We then introduce a set of dual players $\{0\}\cup V_m\triangleq \{1,\cdots,N\}$, and $\mu$ is the decision vector of dual player~$0$, and $\lambda^{[i]}$ is the decision vector of dual player~$i$. Each primal player~$i\in V$ aims to minimize $\LL_i$ over $z^{[i]}\in\real^{n_i}$. Each dual player~$i\in V_m$ desires to maximize $\LL_i$ over $\lambda^{[i]}\in\real^{p}$ and dual player~$0$ wants to maximize $\HH(z,\mu,\zeta) \triangleq \langle\mu, G(\beta,\zeta)\rangle - \frac{\epsilon}{2}\|\mu\|^2 + \tau \sum_{\ell=1}^m\psi(\mu_{\ell})$. This game is referred to as the \emph{regularized} game (RG game, for short) parameterized by $\zeta$ and the definition of its NEs is given as follows:

\begin{definition} The state $(\tilde{z},\tilde{\mu},\tilde{\lambda})\in\real^{n+m+p}$ is a Nash equilibrium of the RG game parameterized by $\zeta$ if and only if the following hold for each primal player $i\in V$: \begin{align*}\LL_i(\tilde{z},\tilde{\mu},\tilde{\lambda}^{[i]},\zeta) \leq \LL_i(z^{[i]},\tilde{z}^{[-i]},\tilde{\mu},\tilde{\lambda}^{[i]},\zeta),\quad \forall z^{[i]}\in\real^{n_i},\end{align*} the following hold for each dual player $i\in V_m$: \begin{align*}\LL_i(\tilde{z},\tilde{\mu},\lambda^{[i]},\zeta) \leq \LL_i(\tilde{z},\tilde{\mu},\tilde{\lambda}^{[i]},\zeta),\quad \forall \lambda^{[i]}\in\real^{p},\end{align*} and the following hold for dual player $0$: \begin{align*}\HH(\tilde{z},\mu,\zeta) \leq \HH(\tilde{z},\tilde{\mu},\zeta),\quad \forall \mu\in\real^m.\end{align*} \label{def2}
\end{definition}

The set of NEs of the RG game parameterized by $\zeta$ is denoted as $\mathbb{X}_{\rm RG}(\zeta)$. Since the logarithmic barrier function $\psi$ penalizes $\mu\notin\real^m_{\leq0}$ an infinite cost, thus it must be $\tilde{\mu}(\zeta) > 0$ for any NE $\tilde{\eta}(\zeta)\in\mathbb{X}_{\rm RG}(\zeta)$. Analogously, $\tilde{\lambda}^{[i]}(\zeta) > 0$, $\tilde{\beta}^{[i]}_{\kappa}(\zeta)\in(0,\beta^{[i]}_{\max})$ and $\tilde{\alpha}^{[i]}_{\kappa\kappa'}(\zeta)\in(0,\alpha^{[i]}_{\max})$ for any NE $\tilde{\eta}(\zeta)\in\mathbb{X}_{\rm RG}(\zeta)$.

\subsubsection{Convexity of the RG game}

Since $f_i$ is strongly convex in $z^{[i]}$ and $\psi$ is concave over $\real_{>0}$, then $\LL_i$ is strongly convex in $z^{[i]}\in \hat{Z}_i \triangleq \{z^{[i]}\in\real^{n_i}\;|\;\beta^{[i]}_{\kappa}\in[0,\beta^{[i]}_{\max}],\quad \kappa\in\mathbb{S},\quad\alpha^{[i]}_{\kappa\kappa'}\in[0,\alpha^{[i]}_{\max}],\quad (\kappa,\kappa')\in\EE_{\mathbb{S}}\}$ with constant $\min\{\rho_i,\rho_i'\}$. By introducing the quadratic perturbation of $\frac{\epsilon}{2}\|\lambda^{[i]}\|^2$, the function of $\LL_i(z,\mu,\cdot,\zeta)$ is strongly concave in $\lambda^{[i]}$ with constant $\epsilon$ over $\real^{p}_{>0}$. This can be verified via the following computation: \begin{align}\frac{d^2 \LL_i}{d (\lambda^{[i]}_{\ell})^2} = -\epsilon - \frac{\tau}{(\lambda^{[i]}_{\ell})^2} < -\epsilon.\label{e31}\end{align} Analogously, the function of $\HH(z,\mu)$ is strongly concave in $\mu$ with constant $\epsilon$ over $\real^m_{>0}$.

\subsubsection{Monotonicity of the RG game}

It is noted that all the functions involved in $\LL_i$ are smooth in $\hat{Z}\times\real^m_{>0}\times\real^{p}_{>0}$. We then define $\nabla_{z^{[i]}}\LL_i(z,\mu,\lambda^{[i]},\zeta) : \hat{Z}\times\real^m_{>0}\times\real^{p}_{>0}\rightarrow\real^{n_i}$ as the partial gradient of the function $\LL_i(\cdot,z^{[-i]},\mu,\lambda^{[i]},\zeta)$ at $z^{[i]}$. Other partial gradients can be defined in an analogous way. Let $\eta\triangleq (z,\mu,\lambda)$, and define the map $\nabla \Omega : \hat{Z}\times\real^m_{>0}\times\real^p_{>0}\rightarrow\real^{n+m+p}$ as partial gradients of the player's objective functions: \begin{align*}&\nabla\Omega(\eta,\zeta)\nnum\\ &\triangleq \big[\nabla_{z^{[1]}}\LL_1(z,\mu,\lambda^{[1]},\zeta)^T\dots
\nabla_{z^{[N]}}\LL_N(z,\mu,\lambda^{[N]},\zeta)^T\\
&\quad\;\;-\nabla_{\mu}\HH(z,\mu,\zeta)^T\\
&\quad-\nabla_{\lambda^{[1]}}\LL_1(z,\mu,\lambda^{[1]},\zeta)^T\dots
-\nabla_{\lambda^{[N]}}\LL_N(z,\mu,\lambda^{[N]},\zeta)^T\big]^T.\end{align*}


The following lemma shows that the quadratic perturbations of $\frac{\epsilon}{2}\|\mu\|^2$ and $\frac{\epsilon}{2}\|\lambda^{[i]}\|^2$ regularize the game map $\nabla\Omega$ to be strongly monotone over $\hat{Z}\times\real^{m+p}_{>0}$.

\begin{lemma} The regularized game map $\nabla\Omega(\eta,\zeta)$ is strongly monotone over $\hat{Z}\times\real^{m+p}_{>0}$ with constant $\rho_{\Omega} = \min\{\min_{i\in V}\{\rho_i,\rho_i'\},\epsilon\}$. In addition, there is a unique NE $\tilde{\eta}(\zeta)\in\mathbb{X}_{\rm RG}(\zeta)$.\label{lem1}
\end{lemma}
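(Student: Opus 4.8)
The statement bundles two claims, strong monotonicity of $\nabla\Omega$ and existence/uniqueness of the RG equilibrium, and the plan is to prove the monotonicity estimate first and then read off the rest. For the monotonicity I would fix two points $\eta=(z,\mu,\lambda)$ and $\eta'=(z',\mu',\lambda')$ in $\hat{Z}\times\real^{m+p}_{>0}$ and lower-bound $\langle\nabla\Omega(\eta,\zeta)-\nabla\Omega(\eta',\zeta),\eta-\eta'\rangle$ directly. First I would write each block of $\nabla\Omega$ explicitly, using that $G(\beta,\zeta)=A\beta+b$ and $h^{[i]}(z^{[i]})=B_iz^{[i]}+d_i$ are affine and that $f_i$ is separable: the primal block $\nabla_{z^{[i]}}\LL_i$ splits into $\nabla f_i(z^{[i]})$, the coupling terms $A_i^{\top}\mu$ and $B_i^{\top}\lambda^{[i]}$, and the (convex) primal-barrier gradient, while the dual blocks $-\nabla_{\mu}\HH$ and $-\nabla_{\lambda^{[i]}}\LL_i$ contribute $-A\beta$, $-B_iz^{[i]}$, the regularizers $\epsilon\mu$ and $\epsilon\lambda^{[i]}$, and the dual-barrier gradients.

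The crucial step is that all the bilinear coupling between primal and dual variables cancels. Summing the primal contributions over $i$ and using that only the $\beta$-components of $z^{[i]}$ enter $G$, the term $\sum_i\langle A_i^{\top}(\mu-\mu'),z^{[i]}-{z'}^{[i]}\rangle$ equals $\langle\mu-\mu',A(\beta-\beta')\rangle$, which is exactly the negative of the term $-\langle A(\beta-\beta'),\mu-\mu'\rangle$ coming from the $\mu$-block; likewise $\langle B_i^{\top}(\lambda^{[i]}-{\lambda'}^{[i]}),z^{[i]}-{z'}^{[i]}\rangle$ cancels $-\langle B_i(z^{[i]}-{z'}^{[i]}),\lambda^{[i]}-{\lambda'}^{[i]}\rangle$ from the $\lambda^{[i]}$-block. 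This is the skew-symmetry inherent to the Lagrangian min-max game: each bilinear form enters the primal gradient with one sign and the negated dual gradient with the opposite sign. What survives is strictly favorable: $\langle\nabla f_i(z^{[i]})-\nabla f_i({z'}^{[i]}),z^{[i]}-{z'}^{[i]}\rangle\geq\min\{\rho_i,\rho_i'\}\|z^{[i]}-{z'}^{[i]}\|^2$ by strong convexity of $f_i$; the quadratic regularizers yield $\epsilon\|\mu-\mu'\|^2$ and $\epsilon\|\lambda^{[i]}-{\lambda'}^{[i]}\|^2$; and every barrier contribution is nonnegative because $\psi$ is concave, so $\psi'$ is nonincreasing and the associated gradient maps are monotone. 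Summing over all players and blocks bounds the inner product below by $\min\{\min_{i\in V}\{\rho_i,\rho_i'\},\epsilon\}\|\eta-\eta'\|^2=\rho_{\Omega}\|\eta-\eta'\|^2$, which is strong monotonicity with the claimed constant.

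Existence and uniqueness then follow. By Definition~\ref{def2} the players optimize over the whole spaces $\real^{n_i}$, $\real^m$, $\real^p$, and the logarithmic barrier makes each $\LL_i$ (resp.\ $\HH$) strongly convex (resp.\ strongly concave) with finite value only in the interior; hence a NE is precisely a zero of $\nabla\Omega$ lying in $\hat{Z}\times\real^{m+p}_{>0}$. Uniqueness is immediate: if $\tilde\eta_1\neq\tilde\eta_2$ were both zeros, strong monotonicity would give $0=\langle\nabla\Omega(\tilde\eta_1,\zeta)-\nabla\Omega(\tilde\eta_2,\zeta),\tilde\eta_1-\tilde\eta_2\rangle\geq\rho_{\Omega}\|\tilde\eta_1-\tilde\eta_2\|^2>0$, a contradiction. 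For existence I would note that each player's regularized objective is coercive with a unique interior optimizer, so the best-response map is well defined, and that the barrier gradients blow up as one approaches the boundary of $\hat{Z}\times\real^{m+p}_{>0}$, confining any solution to the interior; strong monotonicity of $\nabla\Omega$ over this open convex domain then yields a unique zero, e.g.\ via the standard theory for strongly monotone games in~\cite{Facchinei.Kanzow:07}.

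I expect the main obstacle to be the bookkeeping in the cancellation step, namely assembling the per-player coupling terms $A_i^{\top}\mu$ and $B_i^{\top}\lambda^{[i]}$ into the global bilinear forms and matching signs against the dual blocks, together with checking that every barrier term enters with the sign that makes it monotone so that none of them erodes the constant $\rho_{\Omega}$. A secondary subtlety is the existence argument, since the effective domain is open rather than compact; I would address this through the barrier-induced coercivity that keeps the equilibrium strictly interior rather than through a naive compactness/fixed-point argument.
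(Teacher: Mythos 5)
Your proposal is correct and takes essentially the same route as the paper: the paper's proof likewise lower-bounds $\langle \nabla\Omega(\eta,\zeta)-\nabla\Omega(\bar{\eta},\zeta), \eta-\bar{\eta} \rangle$ by the strong-convexity term of $f_i$ (with constant $\min\{\rho_i,\rho_i'\}$), the $\epsilon$-regularizer terms $\epsilon\|\mu-\bar{\mu}\|^2+\epsilon\sum_{i\in V}\|\lambda^{[i]}-\bar{\lambda}^{[i]}\|^2$, and three nonnegative barrier terms $B_1,B_2,B_3$, with the skew-symmetric cancellation of the bilinear primal--dual couplings left implicit in ``since $G$ and $h^{[i]}$ are affine, one can verify,'' and then cites~\cite{Facchinei.Kanzow:07} for existence and uniqueness. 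Your explicit cancellation bookkeeping and the barrier-coercivity remark on interiority of the equilibrium simply flesh out steps the paper compresses.
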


\begin{proof} Pick any pair of $\eta,\bar{\eta}\in \hat{Z}\times\real^{m+p}_{>0}$. Since $G$ and $h^{[i]}$ are affine, one can verify that
\begin{align}&\langle \nabla\Omega(\eta,\zeta)-\nabla\Omega(\bar{\eta},\zeta), \eta-\bar{\eta} \rangle\nnum\\
&\geq \sum_{i\in V}\langle\nabla_{z^{[i]}}f_i(z^{[i]})-\nabla_{z^{[i]}}
f_i(\bar{z}^{[i]}),z^{[i]}-\bar{z}^{[i]}\rangle\nnum\\
&+ \epsilon\|\mu-\bar{\mu}\|^2 + \epsilon\sum_{i\in V}\|\lambda^{[i]}-\bar{\lambda}^{[i]}\|^2 + B_1 + B_2 + B_3,\label{e47}\end{align}
where the terms of $B_1$, $B_2$ and $B_3$ are given by: \begin{align}B_1 &\triangleq -\tau\sum_{i\in V}\sum_{\kappa\in \mathbb{S}}(\frac{1}{\beta^{[i]}_{\kappa}}-\frac{1}{\bar{\beta}^{[i]}_{\kappa}})
(\beta^{[i]}_{\kappa}-\bar{\beta}^{[i]}_{\kappa})\nnum\\
&+\tau\sum_{i\in V}\sum_{\kappa\in \mathbb{S}}(\frac{1}{\beta^{[i]}_{\max}-\beta^{[i]}_{\kappa}}
-\frac{1}{\beta^{[i]}_{\max}-\bar{\beta}^{[i]}_{\kappa}})
(\beta^{[i]}_{\kappa}-\bar{\beta}^{[i]}_{\kappa}),\nnum\\
B_2 &\triangleq -\tau\sum_{i\in V}\sum_{(\kappa,\kappa')\in \mathcal{E}_{\mathbb{S}}}(\frac{1}{\alpha^{[i]}_{\kappa\kappa'}}
-\frac{1}{\bar{\alpha}^{[i]}_{\kappa\kappa'}})
(\alpha^{[i]}_{\kappa\kappa'}-\bar{\alpha}^{[i]}_{\kappa\kappa'})\nnum\\
&+\tau\sum_{i\in V}\sum_{(\kappa,\kappa')\in \mathcal{E}_{\mathbb{S}}}(\frac{1}{\alpha^{[i]}_{\max}-\alpha^{[i]}_{\kappa\kappa'}}
-\frac{1}{\alpha^{[i]}_{\max}-\bar{\alpha}^{[i]}_{\kappa\kappa'}})
(\alpha^{[i]}_{\kappa\kappa'}-\bar{\alpha}^{[i]}_{\kappa\kappa'}),\nnum\\
B_3 &\triangleq \tau \sum_{\ell=1}^m(\log(\mu_{\ell})-\log(\bar{\mu}_{\ell}))(\mu_{\ell}
-\bar{\mu}_{\ell})\nnum\\
&+ \tau \sum_{i\in V}\sum_{\ell=1}^{p}(\log(\lambda^{[i]}_{\ell})-\log(\bar{\lambda}^{[i]}_{\ell}))
(\lambda^{[i]}_{\ell}
-\bar{\lambda}^{[i]}_{\ell}).\nnum\end{align}

By using the monotonicity of functions in $B_1, B_2, B_3$, it is readily to verify that $B_1, B_2, B_3\geq0$. Apply the mean-value theorem for vector functions and $f_i$ is strongly convex with constant $\min\{\rho_i,\rho_i'\}$ to~\eqref{e47}. We then reach the desired result that $\nabla \Omega$ is strongly monotone over $\hat{Z}\times\real^{m+p}_{>0}$ with constant $\rho_{\Omega}$. The strong monotonicity of $\nabla\Omega$ ensures the existence and uniqueness of NE in $\mathbb{X}_{\rm RG}(\zeta)$; i.e., in~\cite{Facchinei.Kanzow:07}.
\end{proof}


\subsection{Approximation errors of the RG game}

As mentioned before, in the RG game, the hard constraints $\mu_{\ell}\geq0$, $\lambda^{[i]}_{\ell}\geq0$ and $z^{[i]}\in Z_i$ are relaxed by those defined via the logarithmic function. Hence, the RG game is completely unconstrained. This will allow us to characterize the sensitivity of the RG game on $\zeta$. However, the RG game is merely an approximation of the CVX game. We now move to characterize how good this approximation is.

By the convexity or concavity of $\LL_i$ on its components, the following first-order conditions hold for the unique NE $\tilde{\eta}(\zeta)\in\mathbb{X}_{\rm RG}(\zeta)$:
\begin{align}&\nabla_{z^{[i]}}\LL_i(\tilde{z},\tilde{\mu},\tilde{\lambda}^{[i]},\zeta) = 0,\quad \nabla_{\lambda^{[i]}}\LL_i(\tilde{z},\tilde{\mu},\tilde{\lambda}^{[i]},\zeta) = 0,\nnum\\
&\nabla_{\mu}\HH(\tilde{z},\tilde{\mu},\zeta) = 0.\label{e76}\end{align}

These relations are explicitly expressed as follows:
\begin{align}&\nabla_{\beta^{[i]}_{\kappa}} f_i(\tilde{z}^{[i]}) + \sum_{\ell=1}^m\tilde{\mu}_{\ell} \nabla_{\beta^{[i]}_{\kappa}} G_{\ell}(\tilde{\beta},\zeta) + \sum_{\ell=1}^{p}\tilde{\lambda}^{[i]}_{\ell} \nabla_{\beta^{[i]}_{\kappa}} h^{[i]}_{\ell}(\tilde{z}^{[i]})\nnum\\
&- \frac{\tau}{\tilde{\beta}^{[i]}_{\kappa}} + \frac{\tau}{\beta^{[i]}_{\max} - \tilde{\beta}^{[i]}_{\kappa}}  = 0,\quad \kappa\in \mathbb{S},\label{e77}\\
&\nabla_{\alpha^{[i]}_{\kappa\kappa'}} f_i(\tilde{z}^{[i]}) + \sum_{\ell=1}^{p}\tilde{\lambda}^{[i]}_{\ell} \nabla_{\alpha^{[i]}_{\kappa\kappa'}} h^{[i]}_{\ell}(\tilde{z}^{[i]})\nnum\\&- \frac{\tau}{\tilde{\alpha}^{[i]}_{\kappa\kappa'}} + \frac{\tau}{\alpha^{[i]}_{\max} - \tilde{\alpha}^{[i]}_{\kappa\kappa'}}  = 0,\quad (\kappa,\kappa')\in \mathcal{E}_{\mathbb{S}},\label{e78}\\
&\epsilon\tilde{\mu}_{\ell} - G_{\ell}(\tilde{\beta},\zeta) - \frac{\tau}{\tilde{\mu}_{\ell}} = 0,\quad \ell=1,\cdots,m,\label{e83}\\
&\epsilon\tilde{\lambda}^{[i]}_{\ell} - h^{[i]}_{\ell}(\tilde{z}^{[i]}) - \frac{\tau}{\tilde{\lambda}^{[i]}_{\ell}} = 0,\quad \ell=1,\cdots,p.\label{e79}\end{align}

Since $\tilde{\mu}_{\ell},\tilde{\lambda}^{[i]}_{\ell}>0$, solving~\eqref{e83} and~\eqref{e79} renders the following: \begin{align}\tilde{\mu}^{[i]}_{\ell} = g(G^{[i]}_{\ell}(\tilde{\beta},\zeta)) > 0, \quad \tilde{\lambda}^{[i]}_{\ell} = g(h^{[i]}_{\ell}(\tilde{z}^{[i]})) > 0.\label{e71}\end{align}

The following proposition verifies that the RG game can be rendered arbitrarily close to the CVX game by choosing a pair of sufficiently small $\epsilon$ and $\tau$. In particular, (P1) and (P2) show that the violation of equality constraints is at most $\max\{\varsigma_h(\epsilon,\tau),\varsigma_G(\epsilon,\tau)\} = o(\epsilon,\tau)$. (P3) implies that the cost at the NE $\tilde{\eta}(\zeta)$ is $o(\tau)$-suboptimal. (P4) and (P5) provide a set of bounds on NEs.

\begin{proposition} The unique NE $\tilde{\eta}(\zeta)\in \mathbb{X}_{\rm RG}(\zeta)$ is an approximation of $\mathbb{X}_{\rm C}(\zeta)$ in the following way:
\begin{enumerate}
\item[(P1)] $|h^{[i]}_{\kappa}(\tilde{z}^{[i]})| \leq \varsigma_h(\epsilon,\tau)$;
\item[(P2)] $|G_{\kappa}(\tilde{\beta})| \leq \varsigma_G(\epsilon,\tau)$;
\item[(P3)] The following holds for any $z^{[i]}\in Z_i$ with $G(\beta^{[i]},\tilde{\beta}^{[-i]},\zeta)\leq 0$ and $h^{[i]}(z^{[i]})\leq 0$: \begin{align*}f_i(\tilde{z}^{[i]})
&\leq f_i(z^{[i]})+\tau(p+m)\\
&+2\tau(|\mathbb{S}|\log(\beta^{[i]}_{\max})
+|\mathcal{E}_{\mathbb{S}}|\log(\alpha^{[i]}_{\max})).\end{align*}
\item[(P4)] It holds that for $i\in V$: \begin{align}&g(-\varsigma_G(\epsilon,\tau))\leq \tilde{\mu}_{\kappa}(\zeta) \leq g(\varsigma_G(\epsilon,\tau)),\nnum\\
&g(-\varsigma_h(\epsilon,\tau))\leq \tilde{\lambda}^{[i]}_{\kappa}(\zeta) \leq g(\varsigma_h(\epsilon,\tau)).\nnum\end{align}
\item[(P5)] It holds that
    \begin{align*}&\frac{\tau\beta^{[i]}_{\max}}{2\tau+\delta_i'\beta^{[i]}_{\max}}
    \leq\tilde{\beta}^{[i]}_{\kappa}\leq \beta^{[i]}_{\max}-\frac{\tau\beta^{[i]}_{\max}}{2\tau+\delta_i'\beta^{[i]}_{\max}},\\
    &\frac{\tau\alpha^{[i]}_{\max}}{2\tau+\delta_i''\alpha^{[i]}_{\max}}
    \leq\tilde{\alpha}^{[i]}_{\kappa\kappa'}\leq \alpha^{[i]}_{\max}-\frac{\tau\alpha^{[i]}_{\max}}{2\tau+\delta_i''\alpha^{[i]}_{\max}}.\end{align*}
\end{enumerate}
\label{pro1}
\end{proposition}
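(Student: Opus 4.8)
The plan is to read everything off the first-order conditions \eqref{e77}--\eqref{e79} for the unique regularized NE $\tilde\eta(\zeta)$, whose existence and uniqueness are guaranteed by Lemma~\ref{lem1}, exploiting two structural facts: $G$ and $h^{[i]}$ are affine, and they are asymmetric, $G_\ell=-G_{\ell+|\mathbb{S}|}$ and $h^{[i]}_\ell=-h^{[i]}_{\ell+|\mathbb{S}|}$. For the constraint-violation bounds (P1), (P2) I would fix a paired index $(\ell,\ell+|\mathbb{S}|)$ and write \eqref{e83} for both. Since $G_{\ell+|\mathbb{S}|}=-G_\ell$, adding the two equations cancels the $G$-terms and yields the clean identity $\epsilon\,\tilde\mu_\ell\tilde\mu_{\ell+|\mathbb{S}|}=\tau$; substituting back gives $G_\ell(\tilde\beta)=\epsilon(\tilde\mu_\ell-\tilde\mu_{\ell+|\mathbb{S}|})$, i.e.\ the residual of the equality $\sum_i\beta^{[i]}_\kappa=U_\kappa$ is $\epsilon$ times an \emph{effective} equality multiplier. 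The same manipulation of \eqref{e79} handles $h^{[i]}$ and $\tilde\lambda^{[i]}$, producing $\varsigma_G,\varsigma_h$. Granting (P1)/(P2), part (P4) is then immediate: by \eqref{e71}, $\tilde\mu_\ell=g(G_\ell)$ and $\tilde\lambda^{[i]}_\ell=g(h^{[i]}_\ell)$ with $g$ strictly increasing, so $|G_\ell|\le\varsigma_G$ and $|h^{[i]}_\ell|\le\varsigma_h$ propagate through $g$ to the stated sandwich bounds.

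For the primal localization (P5) I would isolate the logarithmic-barrier part of \eqref{e77}. Writing $d^{[i]}_\kappa$ for the non-barrier terms $\nabla_{\beta^{[i]}_\kappa}f_i+\sum_\ell\tilde\mu_\ell\nabla_{\beta^{[i]}_\kappa}G_\ell+\sum_\ell\tilde\lambda^{[i]}_\ell\nabla_{\beta^{[i]}_\kappa}h^{[i]}_\ell$, condition \eqref{e77} reads $\tau/\tilde\beta^{[i]}_\kappa-\tau/(\beta^{[i]}_{\max}-\tilde\beta^{[i]}_\kappa)=d^{[i]}_\kappa$. Multiplying by $\tilde\beta^{[i]}_\kappa(\beta^{[i]}_{\max}-\tilde\beta^{[i]}_\kappa)>0$ gives the balance $\tau(\beta^{[i]}_{\max}-2\tilde\beta^{[i]}_\kappa)=d^{[i]}_\kappa\,\tilde\beta^{[i]}_\kappa(\beta^{[i]}_{\max}-\tilde\beta^{[i]}_\kappa)$. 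Bounding $|d^{[i]}_\kappa|\le\delta'_i$ (finite, since $f_i$ is smooth on the compact box, $G,h^{[i]}$ are affine, and the multipliers are finite for fixed $\epsilon,\tau$) together with $0<\beta^{[i]}_{\max}-\tilde\beta^{[i]}_\kappa<\beta^{[i]}_{\max}$ rearranges to $\tau\beta^{[i]}_{\max}\le\tilde\beta^{[i]}_\kappa(2\tau+\delta'_i\beta^{[i]}_{\max})$, which is exactly the lower bound; the upper bound follows from the $\tilde\beta^{[i]}_\kappa\leftrightarrow\beta^{[i]}_{\max}-\tilde\beta^{[i]}_\kappa$ symmetry of the barrier (under which $d^{[i]}_\kappa\mapsto-d^{[i]}_\kappa$, same modulus). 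The $\alpha$-bounds are identical using \eqref{e78} and $\delta''_i$.

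The suboptimality (P3) comes from the primal optimality of $\tilde z^{[i]}$ in the regularized Lagrangian \eqref{e16}. Since $\tilde z^{[i]}$ minimizes $\LL_i(\cdot,\tilde z^{[-i]},\tilde\mu,\tilde\lambda^{[i]},\zeta)$ over $\real^{n_i}$ (Definition~\ref{def2}), I would evaluate $\LL_i(\tilde z^{[i]},\cdots)\le\LL_i(z^{[i]},\cdots)$ at any $z^{[i]}\in Z_i$ feasible for (P3); the purely dual terms ($-\tfrac{\epsilon}{2}\|\tilde\mu\|^2$, $\tau\sum\psi(\tilde\mu_\ell)$, and their $\lambda$-analogues) cancel. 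Feasibility $G(\beta^{[i]},\tilde\beta^{[-i]},\zeta)\le0$, $h^{[i]}(z^{[i]})\le0$ with $\tilde\mu,\tilde\lambda^{[i]}>0$ kills the cross-terms evaluated at $z^{[i]}$, while at $\tilde z^{[i]}$ the identities \eqref{e83}, \eqref{e79} give $-\tilde\mu_\ell G_\ell(\tilde\beta)=\tau-\epsilon\tilde\mu_\ell^2\le\tau$ and $-\tilde\lambda^{[i]}_\ell h^{[i]}_\ell(\tilde z^{[i]})\le\tau$, contributing the $\tau(m+p)$ term. Finally I would drop $-\tau B(z^{[i]})\le0$ (valid since $\psi\ge0$ on $[a,\infty)$ for $z^{[i]}\in Z_i$) and use $\tilde\beta^{[i]}_\kappa\in(0,\beta^{[i]}_{\max})$, $\tilde\alpha^{[i]}_{\kappa\kappa'}\in(0,\alpha^{[i]}_{\max})$ to bound the remaining barrier value by $2\tau(|\mathbb{S}|\log\beta^{[i]}_{\max}+|\mathcal{E}_{\mathbb{S}}|\log\alpha^{[i]}_{\max})$ (up to the additive barrier-floor constant $\log(1/a)$), which assembles into (P3).

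The main obstacle is certifying that $\varsigma_G,\varsigma_h$ are genuinely $o(\epsilon,\tau)$, rather than merely bounded. The identity $G_\ell=\epsilon(\tilde\mu_\ell-\tilde\mu_{\ell+|\mathbb{S}|})$ is \emph{self-consistent}: by \eqref{e71} one has $\tilde\mu_\ell-\tilde\mu_{\ell+|\mathbb{S}|}=g(G_\ell)-g(-G_\ell)=G_\ell/\epsilon$ identically, so the residual is not pinned down by the dual relations alone and must be controlled through the full coupled system. The natural way to close this is to compare $\tilde\eta(\zeta)$ with an exact KKT triple $\eta^\star$ of the CVX game (which exists with finite multipliers, since $\mathbb{X}_{\rm C}(\zeta)$ is nonempty and the feasible set has nonempty interior by Assumption~\ref{asm4}): the strong monotonicity of $\nabla\Omega$ from Lemma~\ref{lem1} bounds $\|\tilde\eta-\eta^\star\|$ by the size of the barrier-plus-regularization perturbation, after which $|G_\ell(\tilde\beta)|=|G_\ell(\tilde\beta)-G_\ell(\beta^\star)|$ is controlled by the Lipschitz modulus of the affine $G$ times $\|\tilde\beta-\beta^\star\|$, yielding the advertised order. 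Establishing this uniform multiplier/perturbation estimate as $(\epsilon,\tau)\to0$ is the delicate step; everything else is a direct consequence of \eqref{e77}--\eqref{e79} and the asymmetry of $G,h^{[i]}$.
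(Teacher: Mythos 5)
Your (P3), (P4) and (P5) essentially coincide with the paper's arguments: (P3) is read directly off the Nash inequality~\eqref{e44} with the dual-only terms cancelling and the cross terms killed by feasibility, (P4) is the monotonicity of $g$ applied to~\eqref{e71}, and your rearrangement of~\eqref{e77} after multiplying by $\tilde{\beta}^{[i]}_{\kappa}(\beta^{[i]}_{\max}-\tilde{\beta}^{[i]}_{\kappa})$ is algebraically the same as the paper's contradiction argument (with the same proviso, which you note, that $\delta_i'$ is assembled from the (P4) multiplier bounds, so (P4) must come first). The genuine gap is exactly where you flagged it: (P1)/(P2). Your diagnosis is correct --- the dual stationarity relations are self-consistent ($\tilde{\mu}_{\ell}-\tilde{\mu}_{\ell+|\mathbb{S}|}=G_{\ell}(\tilde{\beta})/\epsilon$ holds identically by~\eqref{e71}) and cannot pin down the residual --- but the paper's remedy is not a perturbation estimate; it closes the system on the \emph{primal} side. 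Since $\tilde{z}^{[i]}$ minimizes $\LL_i(\cdot,\tilde{z}^{[-i]},\tilde{\mu},\tilde{\lambda}^{[i]},\zeta)$, one tests~\eqref{e44} against a feasible comparison point manufactured from Assumption~\ref{asm4}: for (P1) keep $\hat{\beta}^{[i]}=\tilde{\beta}^{[i]}$ and pick $\hat{\alpha}^{[i]}$ with $h^{[i]}(\hat{z}^{[i]})\leq 0$, which yields the one-sided bound $\langle\tilde{\lambda}^{[i]},h^{[i]}(\tilde{z}^{[i]})\rangle\leq\delta_i$ that the dual relations alone cannot supply; for (P2) additionally reset the offending coordinate to $\hat{\beta}^{[i]}_{\kappa}=\zeta_{\kappa}/N$ (feasible because $\zeta_{\kappa}\in[\frac{c_{\min}}{2},\beta_{\max}-Na]$ by the design of $U$) and use your identity to compute $\langle\tilde{\mu},G(\tilde{\beta})-G(\hat{\beta})\rangle=G_{\kappa}(\tilde{\beta})^2/(N\epsilon)$ as in~\eqref{e72}. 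Combining the one-sided bound with the per-coordinate floor $\tilde{\lambda}^{[i]}_{\ell}h^{[i]}_{\ell}(\tilde{z}^{[i]})\geq-\tau$ from~\eqref{e79} isolates a single coordinate, $\tilde{\lambda}^{[i]}_{\kappa}h^{[i]}_{\kappa}(\tilde{z}^{[i]})\leq\delta_i+(p-1)\tau$, and then $\tilde{\lambda}^{[i]}_{\kappa}=g(h^{[i]}_{\kappa}(\tilde{z}^{[i]}))$ turns this into the quadratic inequality $2h^{[i]}_{\kappa}(\tilde{z}^{[i]})^2\leq 2\epsilon(\delta_i+(p-1)\tau)$, i.e.\ exactly $\varsigma_h(\epsilon,\tau)$; the asymmetric pairing handles the negative sign, and the same scheme gives $G_{\kappa}(\tilde{\beta})^2\leq N\epsilon(\delta_i+p\tau)$.

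Your proposed substitute --- comparing $\tilde{\eta}(\zeta)$ with an exact KKT triple $\eta^{\star}$ through the strong monotonicity of Lemma~\ref{lem1} --- would not deliver the advertised orders even if completed. First, $\rho_{\Omega}=\min\{\min_{i\in V}\{\rho_i,\rho_i'\},\epsilon\}$ degenerates linearly in $\epsilon$, so the estimate $\|\tilde{\eta}-\eta^{\star}\|\leq\|\text{perturbation}\|/\rho_{\Omega}$ would require the barrier-plus-regularization perturbation to be $O(\epsilon^{3/2})$ in order to recover $\varsigma_G\sim\sqrt{\epsilon}$, which it is not. Second, that perturbation is singular precisely at $\eta^{\star}$: by complementary slackness the multipliers of the split constraints can vanish (for each asymmetric pair at most one direction carries a positive multiplier), and the barrier gradient $\tau/\mu_{\ell}$ blows up there; likewise the exact primal solution may sit on the boundary of $Z_i$, where the primal barrier gradients diverge. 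Third, your appeal to ``nonempty interior by Assumption~\ref{asm4}'' is incorrect for the inequality representation: since $G_{\ell}=-G_{\ell+|\mathbb{S}|}$ and $h^{[i]}_{\ell}=-h^{[i]}_{\ell+|\mathbb{S}|}$, feasibility forces equality, so Slater's condition cannot hold; multipliers exist by the affine constraint qualification, but your argument provides no bound on them that is uniform as $(\epsilon,\tau)\to0$. Because your (P4) is deduced from (P1)/(P2), and $\delta_i',\delta_i''$ in (P5) are in turn built from the (P4) bounds, the unproved step propagates through the whole proposition; the primal feasible-deviation test above is the single missing idea.
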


\begin{proof} In order to simplify the notations, we drop the dependency on the parameter $\zeta$ unless necessary.

\textbf{Claim 1:} (P1) holds.

\begin{proof} Since $\tilde{\eta}$ is an NE, then the following holds for any $z^{[i]}\in\real^{n_i}$: \begin{align}\LL_i(\tilde{z},\tilde{\mu},\tilde{\lambda}_i)
-\LL_i(z^{[i]},\tilde{z}^{[-i]},\tilde{\mu},\tilde{\lambda}_i)\leq0,\nnum\end{align}
that is, \begin{align}&f_i(\tilde{z}^{[i]}) + \langle\tilde{\mu}, G(\tilde{\beta})\rangle + \langle\tilde{\lambda}^{[i]},h^{[i]}(\tilde{z}^{[i]})\rangle\nnum\\
&-\tau\sum_{\kappa\in \mathbb{S}}\big(\psi(\tilde{\beta}^{[i]}_{\kappa})+\psi(\beta^{[i]}_{\max}
-\tilde{\beta}^{[i]}_{\kappa})\big)\nnum\\ &-\tau\sum_{(\kappa,\kappa')\in \mathcal{E}_{\mathbb{S}}}\big(\psi(\tilde{\alpha}^{[i]}_{\kappa\kappa'})
+\psi(\alpha^{[i]}_{\max}-\tilde{\alpha}^{[i]}_{\kappa\kappa'})\big)\nnum\\
&\leq f_i(z^{[i]}) + \langle\tilde{\mu}, G(\beta^{[i]},\tilde{\beta}^{[-i]})\rangle +\langle\tilde{\lambda}^{[i]},h^{[i]}(z^{[i]})\rangle\nnum\\& -\tau\sum_{\kappa\in \mathbb{S}}\big(\psi(\beta^{[i]}_{\kappa})+\psi(\beta^{[i]}_{\max}-\beta^{[i]}_{\kappa})\big)\nnum\\ &-\tau\sum_{(\kappa,\kappa')\in \mathcal{E}_{\mathbb{S}}}\big(\psi(\alpha^{[i]}_{\kappa\kappa'})
+\psi(\alpha^{[i]}_{\max}-\alpha^{[i]}_{\kappa\kappa'})\big).\label{e44}\end{align}

Choose $\hat{\beta}^{[i]} = \tilde{\beta}^{[i]}$. By Assumption~\ref{asm4}, there is $\hat{\alpha}$ with $\hat{\alpha}^{[i]}_{\kappa\kappa'}\in[a,\alpha^{[i]}_{\max}-a]$ such that $h^{[i]}(\hat{z}^{[i]})\leq0$. Substitute $\hat{z}^{[i]}$ into~\eqref{e44}, and we have
\begin{align}&f_i(\tilde{z}^{[i]}) + \langle\tilde{\lambda}^{[i]},h^{[i]}(\tilde{z}^{[i]})\rangle\nnum\\
&-\tau\sum_{(\kappa,\kappa')\in \mathcal{E}_{\mathbb{S}}}\big(\psi(\tilde{\alpha}^{[i]}_{\kappa\kappa'})
+\psi(\alpha^{[i]}_{\max}-\tilde{\alpha}^{[i]}_{\kappa\kappa'})\big)\nnum\\
&\leq f_i(\hat{z}^{[i]}) +\langle\tilde{\lambda}^{[i]},h^{[i]}(\hat{z}^{[i]})\rangle\nnum\\ &-\tau\sum_{(\kappa,\kappa')\in \mathcal{E}_{\mathbb{S}}}\big(\psi(\hat{\alpha}^{[i]}_{\kappa\kappa'})
+\psi(\alpha^{[i]}_{\max}-\hat{\alpha}^{[i]}_{\kappa\kappa'})\big).\label{e45}\end{align}

Notice that the last two terms on the right-hand side of~\eqref{e45} are non-positive. So it follows from~\eqref{e45} that
\begin{align}\langle\tilde{\lambda}^{[i]},h^{[i]}(\tilde{z}^{[i]})\rangle\leq \delta_i.\label{e73}\end{align}

By~\eqref{e79}, it is easy to see that $\tilde{\lambda}^{[i]}_{\kappa}h^{[i]}_{\kappa}(\tilde{z}^{[i]})$ are lower bounded by $-\tau$. Substitute these relations into~\eqref{e73}, and it gives that \begin{align}\tilde{\lambda}^{[i]}_{\kappa}h^{[i]}_{\kappa}(\tilde{z}^{[i]}) \leq \delta_i + (p-1)\tau,\label{e75}\end{align}

Consider the first case of $h^{[i]}_{\kappa}(\tilde{z}^{[i]}) \geq 0$. Substitute~\eqref{e71} into~\eqref{e75}, and it renders that \begin{align}2h^{[i]}_{\kappa}(\tilde{z}^{[i]})^2&\leq h^{[i]}_{\kappa}(\tilde{z}^{[i]})^2
+h^{[i]}_{\kappa}(\tilde{z}^{[i]})\sqrt{h^{[i]}_{\kappa}(\tilde{z}^{[i]})^2+4\epsilon\tau}\nnum\\
&\leq 2\epsilon(\delta_i + (p-1)\tau).\nnum\end{align} Hence, $h^{[i]}_{\kappa}(\tilde{z}^{[i]})\leq \varsigma_h(\epsilon,\tau)$.

Consider the second case of $h^{[i]}_{\kappa}(\tilde{z}^{[i]}) \leq 0$. By the asymmetry of the components in $h^{[i]}$, there is $\kappa'\neq \kappa$ such that $h^{[i]}_{\kappa}(\tilde{z}^{[i]}) = -h^{[i]}_{\kappa'}(\tilde{z}^{[i]})$ and $h^{[i]}_{\kappa'}(\tilde{z}^{[i]})\geq 0$. Follow the above steps, and we have $h^{[i]}_{\kappa'}(\tilde{z}^{[i]})\leq \varsigma_h(\epsilon,\tau)$. Hence, we have $h^{[i]}_{\kappa}(\tilde{z}^{[i]})\geq -\varsigma_h(\epsilon,\tau)$. The combination of the above two cases establishes (P1).
\end{proof}

\textbf{Claim 2:} (P2) holds.

\begin{proof} Pick any $1\leq\kappa\leq |\mathbb{S}|$ and let $\kappa' = \kappa+\frac{m}{2}$. Then $G_{\kappa}(\tilde{\beta}) = -G_{\kappa'}(\tilde{\beta})$.

Consider the first case of $G_{\kappa}(\tilde{\beta})\geq0$. Recall that $\sum_{i\in V}\tilde{\beta}^{[i]}_{\kappa} - \zeta_{\kappa} = G_{\kappa}(\tilde{\beta})$. So there exists $i\in V$ such that $\frac{1}{N}(\zeta_{\kappa}+G_{\kappa}(\tilde{\beta})) \leq \tilde{\beta}^{[i]}_{\kappa}$. Choose $\hat{\beta}^{[i]}$ such that $\hat{\beta}^{[i]}_{\kappa} = \frac{\zeta_{\kappa}}{N}$ and $\hat{\beta}^{[i]}_{\kappa'} = \tilde{\beta}^{[i]}_{\kappa'}$ for $\kappa'\neq\kappa$. Recall that $\frac{\zeta_{\kappa}}{N}\in[a,\frac{\beta_{\max}}{N}-a]$. Hence, $\hat{\beta}^{[i]}_{\kappa}\in[a,\beta^{[i]}_{\max}-a]$. By Assumption~\ref{asm4}, there is $\hat{\alpha}^{[i]}$ such that $\hat{\alpha}^{[i]}_{\kappa}\in[a,\alpha_{\max}^{[i]}-a]$ and $h^{[i]}(\hat{z}^{[i]})\leq0$.

By~\eqref{e71} and $G_{\kappa}(\tilde{\beta}) = -G_{\kappa'}(\tilde{\beta})$, we have $\tilde{\mu}_{\kappa}-\tilde{\mu}_{\kappa'} = \frac{1}{\epsilon}G_{\kappa}(\tilde{\beta})$.
Hence, we have \begin{align}&\langle\tilde{\mu},G(\tilde{\beta})-G(\hat{\beta})\rangle\nnum\\
&= \tilde{\mu}_{\kappa}(G_{\kappa}(\tilde{\beta}_{\kappa})-G_{\kappa}(\hat{\beta}))
+\tilde{\mu}_{\kappa'}(G_{\kappa'}(\tilde{\beta}_{\kappa'})-G_{\kappa'}(\hat{\beta}))\nnum\\
&= \tilde{\mu}_{\kappa}\frac{G_{\kappa}(\tilde{\beta})}{N} + \tilde{\mu}_{\kappa'}(-\frac{G_{\kappa}(\tilde{\beta})}{N}) = \frac{G_{\kappa}(\tilde{\beta})^2}{N\epsilon}.\label{e72}\end{align}

Recall that $\tilde{\lambda}^{[i]}_{\kappa}h^{[i]}_{\kappa}(\tilde{z}^{[i]})\geq-\tau$. Substitute $\hat{z}^{[i]}$ into~\eqref{e44}, and it gives that $G_{\kappa}(\tilde{\beta})^2\leq N\epsilon(\delta_i + p\tau)$. Hence, $G_{\kappa}(\tilde{\beta})\leq \sqrt{N\epsilon(\delta_i + p\tau)}$ when $G_{\kappa}(\tilde{\beta})\geq0$.

Analogous to Claim 1, we have $G_{\kappa}(\tilde{\beta})\geq -\sqrt{N\epsilon(\delta_i + p\tau)}$ when $G_{\kappa}(\tilde{\beta})\leq0$. The combination of the above two cases establishes (P2).
\end{proof}

\textbf{Claim 3:} (P3) holds.

\begin{proof} It is a result of the relation~\eqref{e44}.
\end{proof}

\textbf{Claim 4:} (P4) holds.

\begin{proof} It is easy to verify that the first-order derivative $g'(s) = \frac{g(s)}{\sqrt{s+\sqrt{s^2+4\epsilon\tau}}} > 0$ and the second-order derivative $g''(s) = \frac{2s^2+4\epsilon\tau}{(s^2+4\epsilon\tau)^{\frac{3}{2}}} > 0$. Hence, the function $g$ is strictly increasing and strictly convex. From~\eqref{e71} and (P1), we establish the desired bounds on $\tilde{\mu}_{\kappa}$ and $\tilde{\lambda}^{[i]}_{\kappa}$.
\end{proof}

\textbf{Claim 5:} (P5) holds.

\begin{proof} It is noted that \begin{align}&\|\nabla_{\beta^{[i]}_{\kappa}} f_i(\tilde{z}^{[i]}) + \sum_{\ell=1}^m\tilde{\mu}_{\ell} \nabla_{\beta^{[i]}_{\kappa}} G_{\ell}(\tilde{\beta},\zeta) + \sum_{\ell=1}^{p}\tilde{\lambda}^{[i]}_{\ell} \nabla_{\beta^{[i]}_{\kappa}} h^{[i]}_{\ell}(\tilde{z}^{[i]})\|\nnum\\
&\leq \sup_{z^{[i]}\in Z_i}\|\nabla_{\beta^{[i]}_{\kappa}} f_i(z^{[i]})\| + \sum_{\ell=1}^m\|\tilde{\mu}_{\ell}\| + \sum_{\ell=1}^{p}\|\tilde{\lambda}^{[i]}_{\ell}\|\leq \delta_i'.\end{align}

Assume $\tilde{\beta}^{[i]}_{\kappa} < \frac{\tau\beta^{[i]}_{\max}}{2\tau+\delta_i'\beta^{[i]}_{\max}} < \frac{\beta^{[i]}_{\max}}{2}$. Then we have \begin{align}\delta_i'-\frac{\tau}{\tilde{\beta}^{[i]}_{\kappa}} + \frac{\tau}{\beta^{[i]}_{\max}-\tilde{\beta}^{[i]}_{\kappa}} < \delta_i' - \frac{\tau}{\tilde{\beta}^{[i]}_{\kappa}} +\frac{2\tau}{\beta^{[i]}_{\max}}\leq0.\nnum\end{align} This contradicts~\eqref{e77}. So it must be $\tilde{\beta}^{[i]}_{\kappa} \geq \frac{\tau\beta^{[i]}_{\max}}{2\tau+\delta_i'\beta^{[i]}_{\max}}$. The remainder of (P5) can be shown in an analogous way.
\end{proof}

It completes the proof for Proposition~\ref{pro1}.
\end{proof}

\begin{remark} The bounds on $\beta_{\kappa}^{[i]}$ and $\alpha^{[i]}_{\kappa\kappa'}$ are shifted by $a$, and the argument in $\psi$ is scaled by $a$. In this way the last two terms on the right-hand side of~\eqref{e45} are non-positive for any $z^{[i]}\in Z_i$.\oprocend\label{rem6}
\end{remark}

\subsection{Sensitivity analysis}

As mentioned before, the RG game is completely unconstrained. This allows us to perform sensitivity analysis on the RG game, and characterize the variation of the NE $\tilde{\eta}(U(\xi))$ induced by the variation of $\xi$. In this part, we will drop the dependency of the NE $\tilde{\eta}(\zeta)$ on $\zeta$ unless necessary.

Toward this end, we denote a set of matrices as follows: \begin{align*}&R_1(\tilde{\eta}) \triangleq \left[\begin{array}{ccc}
\nabla_{z^{[1]}z^{[1]}}^2\LL_1 & \cdots & \nabla_{z^{[1]}z^{[N]}}^2\LL_1 \\
\vdots & \ddots & \vdots \\
\nabla_{z^{[1]}z^{[1]}}^2\LL_N & \cdots & \nabla_{z^{[N]}z^{[1]}}^2\LL_N \\
\end{array}\right],\nnum\\
&R_2 \triangleq \left[\begin{array}{ccc}
\nabla_{z^{[1]}}G_1(\tilde{\beta},\zeta)^T & \cdots & \nabla_{z^{[1]}}G_m(\tilde{\beta},\zeta)^T \\
\vdots & \ddots & \vdots \\
\nabla_{z^{[N]}}G_1(\tilde{\beta},\zeta)^T & \cdots & \nabla_{z^{[N]}}G_m(\tilde{\beta},\zeta)^T \\
\end{array}\right],\nnum\\
&R_3 \triangleq \diag{[\nabla_{z^{[i]}} h^{[i]}_1(\tilde{z}^{[i]})^T,\cdots,\nabla_{z^{[i]}} h^{[i]}_{p}(\tilde{z}^{[i]})^T]}_{i\in V},\nnum\\
&R_4(\tilde{\eta}) \triangleq\diag{\epsilon+\frac{\tau}{\tilde{\mu}_{\ell}^2}}_{\ell=1,\cdots,m},\nnum\\
&R_5(\tilde{\eta}) \triangleq \diag{\diag{\epsilon
+\frac{\tau}{(\tilde{\lambda}^{[i]}_{\ell})^2}}_{\ell=1,\cdots,p}}_{i\in V}.\end{align*}

Recall that $G$ and $h^{[i]}$ are affine. Then $\nabla_{z^{[j]}}\LL_i = 0$ if $i\neq j$. Since $\LL_i$ is separable in its components, thus $R_1(\tilde{\eta},\zeta)$ is diagonal, symmetric and positive definite. In addition, $R_2$ and $R_3$ are constant due to $G$ and $h^{[i]}$ being affine.

With the above notations at hand, we can derive the partial derivative of the left-hand side of~\eqref{e76} with respect to $\tilde{\eta}$ evaluated at $(\tilde{\eta},\zeta)$, and this derivative is given by:
\begin{align}J_M(\tilde{\eta}) \triangleq \left[\begin{array}{ccc}
                                                R_1(\tilde{\eta}) & R_2^T & R_3^T\\
                                                -R_2 & R_4(\tilde{\eta}) & 0\\
                                                -R_3 & 0 & R_5(\tilde{\eta})
                                              \end{array}\right].\nnum
\end{align}

Let $J_N$ be the partial derivative of the left-hand side of~\eqref{e77} to~\eqref{e79} with respect to $\zeta$. Since $G$ is affine in $\zeta$, then $J_N$ is state-independent. We then denote \begin{align}J(\tilde{\eta})\triangleq
J_M(\tilde{\eta})^{-1}J_N,\label{e48}\end{align} where $J_M(\tilde{\eta})^{-1}$ will be shown to be non-singular in the following lemma.

\begin{lemma} The matrix $J_M(\tilde{\eta}(\zeta))$ is non-singular, positive definite and its spectrum is uniformly lower bounded by $\epsilon\min_{i\in V}\{\rho_i,\rho_i'\} > 0$. In addition, $J(\tilde{\eta}(\zeta))$ is continuously differential in $\zeta$, and the following relation holds: \begin{align}\frac{d \tilde{\eta}(U(\xi(t)))}{d t}=J(\tilde{\eta}(U(\xi(t))))\frac{d U(\xi(t))}{d \xi(t)}\dot{\xi}(t).\label{e4}\end{align}\label{lem4}
\end{lemma}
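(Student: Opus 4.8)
The plan is to establish the three assertions in Lemma~\ref{lem4} in sequence, treating the nonsingularity and positive-definiteness of $J_M(\tilde{\eta})$ first, since the remaining claims follow from the implicit function theorem once invertibility is in hand. To analyze $J_M(\tilde{\eta})$, I would exploit its block structure: the diagonal blocks $R_1(\tilde{\eta})$, $R_4(\tilde{\eta})$ and $R_5(\tilde{\eta})$ are all symmetric positive definite. Indeed, by the discussion preceding the lemma, $R_1(\tilde{\eta})$ is diagonal, symmetric and positive definite with smallest eigenvalue at least $\min_{i\in V}\{\rho_i,\rho_i'\}$ (the strong-convexity constant of the $f_i$), while $R_4(\tilde{\eta})$ and $R_5(\tilde{\eta})$ are diagonal with entries $\epsilon + \tau/\tilde{\mu}_\ell^2 > \epsilon$ and $\epsilon + \tau/(\tilde{\lambda}^{[i]}_\ell)^2 > \epsilon$ respectively, using $\tilde{\mu}_\ell,\tilde{\lambda}^{[i]}_\ell > 0$ established after Definition~\ref{def2}.

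The key structural observation is that $J_M(\tilde{\eta})$ has the skew-coupled form in which the off-diagonal blocks appear as $R_2^T, R_3^T$ above the diagonal and $-R_2, -R_3$ below it. Consequently, for any nonzero vector $w = (w_1,w_2,w_3)$ partitioned conformally, the quadratic form $w^T J_M(\tilde{\eta}) w$ has all cross terms cancel: the contribution $w_1^T R_2^T w_2$ is exactly annihilated by $-w_2^T R_2 w_1$, and similarly for the $R_3$ coupling. Therefore
\begin{align}
w^T J_M(\tilde{\eta}) w = w_1^T R_1(\tilde{\eta}) w_1 + w_2^T R_4(\tilde{\eta}) w_2 + w_3^T R_5(\tilde{\eta}) w_3,\nonumber
\end{align}
which is a sum of three positive-definite forms. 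Bounding each term below by its smallest eigenvalue gives $w^T J_M(\tilde{\eta}) w \geq \rho_\Omega \|w\|^2$ with $\rho_\Omega = \min\{\min_{i\in V}\{\rho_i,\rho_i'\},\epsilon\}$, echoing the strong-monotonicity constant of Lemma~\ref{lem1}. This simultaneously yields nonsingularity (the symmetric part being positive definite forces $J_M(\tilde{\eta})v \neq 0$ for $v\neq 0$) and the claimed spectral lower bound. I would, however, flag one point of care: the stated bound $\epsilon\min_{i\in V}\{\rho_i,\rho_i'\}$ is a \emph{product}, whereas the natural bound from this argument is the \emph{minimum} $\min\{\min_{i\in V}\{\rho_i,\rho_i'\},\epsilon\}$; reconciling the exact form of the constant (likely via a refined estimate or a renormalization of the dual blocks) is where I expect the main technical friction, so I would verify carefully whether the product bound requires an additional scaling argument on the $R_4,R_5$ blocks.

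For the final assertions, since $G$ and $h^{[i]}$ are affine, the map defining the first-order conditions~\eqref{e76} is smooth in $(\tilde{\eta},\zeta)$, and its Jacobian with respect to $\tilde{\eta}$ is precisely $J_M(\tilde{\eta})$, now known to be nonsingular everywhere on the relevant domain. The implicit function theorem then guarantees that the NE map $\zeta\mapsto\tilde{\eta}(\zeta)$ is continuously differentiable, with $\tfrac{d\tilde{\eta}}{d\zeta} = -J_M(\tilde{\eta})^{-1}J_N = J(\tilde{\eta})$ up to sign conventions absorbed in the definition~\eqref{e48}; continuous differentiability of $J(\tilde{\eta}(\zeta))$ in $\zeta$ follows from the smoothness of $J_M^{-1}$ as a function of its (smoothly varying) entries. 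The evolution equation~\eqref{e4} is then obtained by the chain rule, differentiating $\tilde{\eta}(U(\xi(t)))$ in $t$ through the intermediate variables $\zeta = U(\xi(t))$ and $\xi(t)$, using the differentiability of $U$ established in Lemma~\ref{lem2}.
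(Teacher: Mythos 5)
Your proposal is correct and, for the first half of the lemma, takes a genuinely different route from the paper. The paper proves non-singularity via the block factorization~\eqref{e46}: it writes ${\rm det}(J_M(\tilde{\eta}))$ as ${\rm det}\big(\diag{R_4,R_5}\big)\,{\rm det}(T_1)$, where the Schur complement $T_1 = R_1 + T_2$ picks up a positive semi-definite $T_2$ precisely because of the sign flip between the blocks $(R_2^T,R_3^T)$ and $(-R_2,-R_3)$, and concludes ${\rm det}(J_M)\neq 0$. You instead exploit the same skew coupling one step earlier: the symmetric part of $J_M(\tilde{\eta})$ is exactly $\diag{R_1(\tilde{\eta}),R_4(\tilde{\eta}),R_5(\tilde{\eta})}$, so $w^T J_M(\tilde{\eta}) w \geq \min\{\min_{i\in V}\{\rho_i,\rho_i'\},\epsilon\}\|w\|^2$ for all $w$, which delivers non-singularity (in the correct sense for a nonsymmetric matrix) and a spectral lower bound in one stroke; it also immediately gives $\|J_M^{-1}\|\leq \big(\min\{\min_{i\in V}\{\rho_i,\rho_i'\},\epsilon\}\big)^{-1}$, since $\|J_M v\|\|v\|\geq v^TJ_Mv$. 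The second half of your argument --- implicit function theorem applied to the first-order conditions~\eqref{e76}, then the chain rule through $\zeta = U(\xi(t))$ using the smoothness of $U$ from Lemma~\ref{lem2} --- is the same route as the paper (which invokes the inverse function theorem), and your hedge about the sign in $\frac{d\tilde{\eta}}{d\zeta} = -J_M(\tilde{\eta})^{-1}J_N$ versus the definition~\eqref{e48} is fair: the paper is silent on this, and the sign must be absorbed into the orientation of $J_N$.

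On the constant you flagged: your concern is well founded, and your bound is the defensible one. The paper's chain $\lambda_{\min}(J_M(\tilde{\eta}))\geq \lambda_{\min}\big(\diag{R_4,R_5}\big)\lambda_{\min}(T_1)$ is not a valid inequality --- \eqref{e46} is a block-\emph{triangular} factorization of a nonsymmetric matrix, and eigenvalues, unlike determinants, are not multiplicative across such factors; the product form $\epsilon\min_{i\in V}\{\rho_i,\rho_i'\}$ is therefore unsupported as stated. Your field-of-values bound $\min\{\min_{i\in V}\{\rho_i,\rho_i'\},\epsilon\}$ is rigorous, coincides with the strong-monotonicity constant $\rho_{\Omega}$ of Lemma~\ref{lem1} (as one would expect, since $J_M$ is the Jacobian of $\nabla\Omega$), and in the intended regime of small $\epsilon$ with $\rho_i,\rho_i'\leq 1$ it dominates the product, so all downstream uses of $(\epsilon\min_{i\in V}\{\rho_i,\rho_i'\})^{-1}$ as a bound on $\|J_M^{-1}\|$ (e.g., in Lemma~\ref{lem5} and Claim~1 of Theorem~\ref{the2}) remain valid a fortiori under your constant. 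No additional scaling argument on $R_4,R_5$ is needed; the lemma's stated constant should simply be read as (or replaced by) $\rho_{\Omega}$.
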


\begin{proof} Recall the following identity for non-singular $A_1$:
\begin{align}\left[
\begin{array}{cc}
A_1 & A_2 \\
A_3 & A_4 \\
\end{array}
\right] = \left[
\begin{array}{cc}
A_1 & 0 \\
A_3 & I \\
\end{array}
\right]\left[
\begin{array}{cc}
I & A_1^{-1}A_2 \\
0 & A_4 - A_3A_1^{-1}A_2 \\
\end{array}
\right].\label{e46}
\end{align}

By~\eqref{e46}, the determinant of $J_M(\tilde{\eta})$ is computed as follows: \begin{align}{\rm det}(J_M(\tilde{\eta})) = {\rm det}\left[\begin{array}{cc}R_4(\tilde{\eta}) & 0\\0 & R_5(\tilde{\eta})\end{array}\right]{\rm det}(T_1(\tilde{\eta})),\nnum\end{align} where $T(\tilde{\eta})$ is given by: \begin{align*}T_1(\tilde{\eta})&\triangleq R_1(\tilde{\eta}) + T_2(\tilde{\eta})\\ &\triangleq R_1(\tilde{\eta})+[R_2^T\;\;R_3^T]\left[\begin{array}{cc}R_4(\tilde{\eta}) & 0\\0 & R_5(\tilde{\eta})\end{array}\right]^{-1}\left[
                                                 \begin{array}{c}
                                                   R_2 \\
                                                   R_3 \\
                                                 \end{array}
                                               \right].\end{align*}

Recall that $R_1(\tilde{\eta})$, $R_4(\tilde{\eta})$ and $R_5(\tilde{\eta})$ are symmetric and diagonal. So $T_1(\tilde{\eta})$ and $T_2(\tilde{\eta})$ are symmetric. Since $R_4(\tilde{\eta})$ and $R_5(\tilde{\eta})$ are positive definite and diagonal, so $T_2(\tilde{\eta})$ is positive semi-definite. Recall $R_1(\tilde{\eta})$ is positive definite. By using $\lambda_{\min}(A_1+A_2)\geq \lambda_{\min}(A_1) + \lambda_{\min}(A_2)$, we know that $T_1(\tilde{\eta})$ is positive definite. Hence, ${\rm det}(T_1(\tilde{\eta}))\neq0$ and ${\rm det}(J_M(\tilde{\eta}))\neq0$. This implies that $J_M(\tilde{\eta})$ is non-singular.

By~\eqref{e46} again, the determinant of $J_M(\tilde{\eta})$ is computed as follows: \begin{align}\lambda_{\min}(J_M(\tilde{\eta})) &\geq \lambda_{\min}\big(\left[\begin{array}{cc}R_4(\tilde{\eta}) & 0\\0 & R_5(\tilde{\eta})\end{array}\right]\big)\lambda_{\min}(T_1(\tilde{\eta}))\nnum\\
&\geq \epsilon\lambda_{\min}(R_1(\tilde{\eta})) \geq \epsilon\min_{i\in V}\{\rho_i,\rho_i'\}.\nnum\end{align}

Recall that $J_M(\tilde{\eta})$ is non-singular. By the inverse function theorem, we reach that $\tilde{\eta}(\zeta)$ and $J(\tilde{\eta}(\zeta))$ are continuously differentiable in $\zeta$ and the derivative of $\tilde{\eta}$ with respect to~$\zeta$ is given by: \begin{align}\frac{d \tilde{\eta}(\zeta)}{d \zeta} = J(\tilde{\eta}(\zeta)).\label{e82}\end{align}
With the relation~\eqref{e82}, we establish the derivative of $\tilde{\eta}(U(\xi(t)))$ with respect to $t$ as follows: \begin{align}\frac{d \tilde{\eta}(U(\xi(t)))}{d t} = \frac{d \tilde{\eta}(\zeta(t))}{d \zeta(t)}\dot{\zeta}(t)=J(\tilde{\eta}(\zeta(t)))\frac{d U(\xi(t))}{d \xi(t)}\dot{\xi}(t),\nnum\end{align} where $\frac{d U(\xi(t))}{d \xi(t)}$ is well-defined since $U$ is smooth.
\end{proof}

\begin{remark} In the paper~\cite{Chen.Lau:12}, a relation like~\eqref{e4} between saddle-points and the parameter is derived from the Karush-Kuhn-Tucker condition. However, the results in~\cite{Chen.Lau:12} are not applicable to our problem. Firstly, the Lagrangian functions $\LL_i$ and $\HH$ are merely concave in $\lambda^{[i]}$ and $\mu$ if $\epsilon, \tau = 0$. Secondly, the paper~\cite{Chen.Lau:12} assumes that the state-dependent matrix derived from the Karush-Kuhn-Tucker condition is uniformly non-singular. This is not easy to check \emph{a priori} and may lead to instability in our feedback setup.\oprocend\label{rem4}
\end{remark}

\begin{lemma} The functions $J(\eta)\frac{d U(\xi)}{d \xi}$ and $\nabla\Omega$ are Lipschitz continuous with constant $L_J > 0$ and $L_{\Omega} > 0$, respectively, over $Y$.\label{lem5}
\end{lemma}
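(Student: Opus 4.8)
The plan is to observe that both maps are assembled from finitely many smooth ingredients, and that the only possible source of non-Lipschitz behavior---the singularities of the logarithmic barrier $\psi$ at the boundary of its domain---is excluded on $Y$. Indeed, by the construction of $Y$ together with the uniform interior bounds (P4)--(P5) of Proposition~\ref{pro1}, the primal variables $\beta^{[i]}_{\kappa}$ and $\alpha^{[i]}_{\kappa\kappa'}$ remain in compact subintervals of $(0,\beta^{[i]}_{\max})$ and $(0,\alpha^{[i]}_{\max})$, while the dual variables $\mu_{\ell}$ and $\lambda^{[i]}_{\ell}$ remain in a compact subinterval of $\real_{>0}$ bounded away from $0$. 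On such a set every derivative of $\psi$ (namely $\psi'(s)=1/s$, $\psi''(s)=-1/s^2$, $\psi'''(s)=2/s^3$) is uniformly bounded. Since $Y$ is compact and convex, a uniform bound on the Jacobian of a $C^1$ map yields a Lipschitz constant by the mean-value inequality, so it suffices in each case to bound a single Jacobian.

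For $\nabla\Omega$ I would bound its Jacobian in $(\eta,\zeta)$ block by block. Differentiation produces: the Hessians $\nabla^2 f_i$, bounded on the compact $Z_i$ by smoothness and strong convexity; the constant blocks coming from the affine $G$ and $h^{[i]}$; the barrier second derivatives $\tau/(\beta^{[i]}_{\kappa})^2$, $\tau/(\beta^{[i]}_{\max}-\beta^{[i]}_{\kappa})^2$, $\tau/(\alpha^{[i]}_{\kappa\kappa'})^2$, $\tau/\mu_{\ell}^2$ and $\tau/(\lambda^{[i]}_{\ell})^2$, all uniformly bounded on $Y$ by the previous paragraph; and the constants $\epsilon$ from the quadratic regularizations. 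The dependence on $\zeta$ enters only through $G(\beta,\zeta)$, which is affine and hence has a constant derivative. Collecting these bounds produces $L_{\Omega}$.

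For $J(\eta)\tfrac{dU(\xi)}{d\xi}$ I would proceed in three moves. First, $J_M(\eta)$ is Lipschitz in $\eta$ on $Y$: the blocks $R_2,R_3$ are constant (affine $G,h^{[i]}$), whereas $R_1(\eta)$ collects $\nabla^2 f_i$ plus barrier Hessians and $R_4(\eta),R_5(\eta)$ are diagonal with entries $\epsilon+\tau/\mu_{\ell}^2$ and $\epsilon+\tau/(\lambda^{[i]}_{\ell})^2$; differentiating these once more involves only the bounded third derivative of $\psi$, so $J_M$ is Lipschitz with some constant $L_{J_M}$. Second, Lemma~\ref{lem4} gives the uniform bound $\lambda_{\min}(J_M(\eta))\ge c_0\triangleq\epsilon\min_{i\in V}\{\rho_i,\rho_i'\}>0$, which yields $\|J_M(\eta)^{-1}\|\le 1/c_0$ (from $c_0\|x\|^2\le x^TJ_M x\le\|x\|\,\|J_M x\|$); combined with the resolvent identity $J_M(\eta_1)^{-1}-J_M(\eta_2)^{-1}=J_M(\eta_1)^{-1}\big(J_M(\eta_2)-J_M(\eta_1)\big)J_M(\eta_2)^{-1}$ this shows $J_M^{-1}$, and hence $J=J_M^{-1}J_N$ with $J_N$ constant, is both bounded and Lipschitz. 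Third, $\tfrac{dU(\xi)}{d\xi}$ is bounded by $D_U^{(1)}$ and Lipschitz with constant $D_U^{(2)}$ by Lemma~\ref{lem2}; since a product of two bounded Lipschitz matrix-valued maps is Lipschitz by the standard add-and-subtract estimate, this delivers $L_J$.

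The main obstacle I expect is not any single estimate but the bookkeeping that all barrier-induced quantities stay uniformly bounded, which rests entirely on confining every primal and dual coordinate to the strict interior through (P4)--(P5); the one genuinely non-routine step is transferring Lipschitz continuity from $J_M$ to its inverse, for which the uniform spectral lower bound of Lemma~\ref{lem4} (guaranteeing $\|J_M^{-1}\|\le 1/c_0$ uniformly on $Y$) is indispensable.
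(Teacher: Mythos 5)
Your proposal is correct and follows essentially the same route as the paper: the paper likewise defines $L_{\Omega}$ as $\sup_{\eta\in Y}\|\tfrac{d\nabla\Omega(\eta)}{d\eta}\|$ on the compact box $Y$ (where all barrier derivatives are bounded), controls the inverse via the uniform spectral lower bound $\epsilon\min_{i\in V}\{\rho_i,\rho_i'\}$ from Lemma~\ref{lem4}, and combines with the bounds $D_U^{(1)},D_U^{(2)}$ of Lemma~\ref{lem2}. Your only deviations are cosmetic: you use the resolvent identity where the paper differentiates $J_M(\eta)J_M(\eta)^{-1}=I$ to get $\|\tfrac{dJ_M(\eta)^{-1}}{d\eta}\|\leq\|J_M(\eta)^{-1}\|^2\|\tfrac{dJ_M(\eta)}{d\eta}\|$, and an add-and-subtract product estimate where the paper bounds the two product-rule terms $\tfrac{dJ(\eta)}{d\eta}\tfrac{dU(\xi)}{d\xi}$ and $J(\eta)\tfrac{d^2U(\xi)}{d\xi^2}$ separately.
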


\begin{proof} Note that $J_M(\eta)J_M(\eta)^{-1}=I$. Take the derivative on $\eta$, and we have \begin{align}\frac{d J_M(\eta)}{d \eta}J_M(\eta)^{-1} + J_M(\eta)\frac{d J_M(\eta)^{-1}}{d \eta} = 0.\nnum\end{align} This gives the following relations: \begin{align}\|\frac{d J_M(\eta)^{-1}}{d \eta}\| &\leq \|J_M(\eta)^{-1}\|\|\frac{d J_M(\eta)}{d \eta}\|\|J_M(\eta)^{-1}\|\nnum\\
&\leq (\epsilon\min_{i\in V}\{\rho_i,\rho_i'\})^{-2}\|\frac{d J_M(\eta)}{d \eta}\|,\nnum\end{align} where in the last inequality we use Lemma~\ref{lem4}.

By~\eqref{e74}, we derive the following relations: \begin{align}&\|\frac{d J(\eta)}{d \eta}\frac{d U(\xi)}{d \xi}\|\leq \|\frac{d J_M(\eta)^{-1}}{d \eta}\|\|J_N\|\|\frac{d U(\xi)}{d \xi}\|\nnum\\
&\leq (\epsilon\min_{i\in V}\{\rho_i,\rho_i'\})^{-2}\|\frac{d J_M(\eta)}{d \eta}\|\|\|J_N\|\|\frac{d U(\xi)}{d \xi}\|,\nnum\\
&\|J(\eta)\frac{d^2 U(\xi)}{d^2 \xi}\|\leq\|J(\eta)\|\|\frac{d^2 U(\xi)}{d^2 \xi}\|\nnum\\
&\leq \epsilon\min_{i\in V}\{\rho_i,\rho_i'\}\|J_N\|D_U^{(2)}.\label{e55}\end{align} From~\eqref{e55}, we reach the desired Lipschitz constant $L_J$ on $J(\eta)\frac{d U(\xi)}{d \xi}$ over $Y$.
\end{proof}

\section{Real-time game theoretic coordination}\label{sec:main}

In this section, we will present an algorithm for the real-time game theoretic coordination. It will be followed by the convergence properties of the closed-loop system.

\subsection{Algorithm statement}

Denote by $\tilde{\eta}(t)$ the NE of the CVX game parameterized by $\zeta(t) = U(\xi(t))$ where $\tilde{\eta}(t)$ consists of \begin{align*}&\tilde{\beta}(t) \triangleq ((\tilde{\beta}^{[i]}_{\kappa}(t))_{\kappa\in\mathbb{S}})_{i\in V},\quad\tilde{\alpha}(t) \triangleq ((\tilde{\alpha}^{[i]}_{\kappa\kappa'}(t))_{(\kappa,\kappa')\in \mathcal{E}_{\mathbb{S}}})_{i\in V},\\
&\tilde{\mu}(t)\triangleq (\tilde{\mu}_{\kappa}(t))_{\kappa\in\{1,\cdots,m\}},\quad \tilde{\lambda}(t) \triangleq ((\tilde{\lambda}^{[i]}_{\kappa}(t))_{\kappa\in\{1,\cdots,p\}})_{i\in V}.\end{align*}

At each time~$t$, each primal player~$i$ maintains the estimates $(\hat{\beta}^{[i]}_{\kappa}(t))_{\kappa\in\mathbb{S}}$ and $(\hat{\alpha}^{[i]}_{\kappa\kappa'}(t))_{(\kappa,\kappa')\in \mathcal{E}_{\mathbb{S}}}$ of $(\tilde{\beta}^{[i]}_{\kappa}(t))_{\kappa\in\mathbb{S}}$ and $(\tilde{\alpha}^{[i]}_{\kappa\kappa'}(t))_{(\kappa,\kappa')\in \mathcal{E}_{\mathbb{S}}}$. Each dual player~$i$ maintains the estimates $(\tilde{\lambda}^{[i]}_{\kappa}(t))_{\kappa\in\{1,\cdots,p\}}$ of $(\lambda^{[i]}_{\kappa}(t))_{\kappa\in\{1,\cdots,p\}}$. The operator then maintains the estimate $\mu(t)$ of $\tilde{\mu}(t)$.

The decision makers update their own estimates by decreasing the distance to the instantaneous Nash equilibrium $\tilde{\eta}(t)$ and simultaneously following the temporal variation of $\tilde{\eta}(t)$. The update rules are given in Algorithm~\ref{ta:algo2}. In particular, the quantity $v(t) \triangleq J(\eta(t),U(\xi(t)))\frac{d U(\xi(t))}{d \xi(t)}\dot{\xi}(t)$ serves as the estimate of $\frac{d\tilde{\eta}(U(\xi(t)))}{d t}$ in Lemma~\ref{lem4}, and is decomposed into $(v^{[1]}_p(t)^T,\cdots,v^{[N]}_p(t)^T, v_{\mu}(t)^T, v^{[1]}_d(t)^T,\cdots,v^{[N]}_d(t)^T)^T$ where $v^{[i]}_p(t)$ (resp. $v^{[i]}_d(t)$) is assigned to primal (resp. dual) player~$i$ and $v_{\mu}(t)$ is assigned to the operator.

The quantities of $\hat{\beta}^{[i]}(t)$ and $\hat{\alpha}^{[i]}(t)$ are intermediate estimates and probably fails to enforce the constraint $\dot{v}_{\kappa}(t) = 0$. To address this, each primal player~$i$ obtains $(\alpha^{[i]}(t),\beta^{[i]}(t)) = \mathbb{Q}_i(b^{[i]}(t))$ where $\beta^{[i]}(t) = \hat{\beta}^{[i]}(t)$ and $\alpha^{[i]}(t)$ is the orthogonal projection of $\hat{\alpha}^{[i]}(t)$ onto the set $\Xi^{[i]}(t)$ defined by: \begin{align*}\Xi^{[i]}(t) \triangleq &\{\alpha^{[i]}\in\real^{|\mathcal{E}_{\mathbb{S}}|}\;|\;A \alpha^{[i]} = b^{[i]}(t),\\ &\;\;\alpha^{[i]}_{\kappa\kappa'}\in[a,\alpha^{[i]}_{\max}-a],\quad (\kappa,\kappa')\in\EE_{\mathbb{S}}\}\end{align*} where $b^{[i]}_{\kappa}(t) = \beta^{[i]}_{\kappa}(t) - \sum_{\kappa'\in\NN_{\kappa}}a_{\kappa'\kappa}\beta^{[i]}_{\kappa'}(t)$ and $b^{[i]}(t) = (b^{[i]}_{\kappa}(t))_{\kappa\in\mathbb{S}}$. After that, all the players in $V$ implements the control commands $\beta^{[i]}_{\kappa}(t)$ and $\alpha^{[i]}_{\kappa\kappa'}(t)$ in the queue dynamics~\eqref{e18}. Here, each primal player~$i$ prioritizes the stabilization of the user queues, and enforces the constraint $\dot{v}_{\kappa}(t) = 0$ by reallocating his empty vehicles.

\begin{remark} The computation of the orthogonal projection can be encoded by the following quadratic program which can be solved by a number of existing efficient algorithms: \begin{align*}&\min_{\alpha^{[i]}\in\real^{|\mathcal{E}_{\mathbb{S}}|}}\|\alpha^{[i]} - \hat{\alpha}^{[i]}(t)\|^2\\
&{\rm s.t.}\quad A \alpha^{[i]} = b^{[i]}(t), \quad \alpha^{[i]}_{\kappa\kappa'}\in[a,\alpha^{[i]}_{\max}-a],\quad (\kappa,\kappa')\in\EE_{\mathbb{S}}.\end{align*}

If $\alpha_{\max}$ is sufficiently large, the computation of the orthogonal projection can be greatly simplified. Let $\alpha^{[i]}(t)$ to the projection of $\hat{\alpha}^{[i]}(t)$ onto the solution set of $A \alpha^{[i]} = b^{[i]}(t)$. Notice that $A$ is orthogonal to all the vectors in the plane of $A \alpha^{[i]} = b^{[i]}(t)$. Then we have \begin{align*}A(A\hat{\alpha}^{[i]}(t)- A\alpha^{[i]}(t)) = \hat{\alpha}^{[i]}(t)- \alpha^{[i]}(t).\end{align*} That is, \begin{align*}\alpha^{[i]}(t) = \hat{\alpha}^{[i]}(t) -A(A\hat{\alpha}^{[i]}(t)- b^{[i]}(t)).\end{align*}\label{rem3}\oprocend
\end{remark}

%
%

The real-time game theoretic regulator is formally stated in Algorithm~\ref{ta:algo2}.

\begin{algorithm}[htbp]\caption{Real-time game theoretic coordinator} \label{ta:algo2}
\begin{algorithmic}[1]
\REQUIRE Each primal (resp. dual) player~$i$ chooses the initial state $z^{[i]}(0)\in Z_i$ (resp. $\lambda^{[i]}(0)\in \Lambda_i$). The operator chooses the initial state $\mu(0)\in M$.
\ENSURE At each time instant $t \ge 0$, the decision makers execute the following steps:
\end{algorithmic}

\begin{algorithmic}[1]
\STATE The operator generates the estimate $v(t) = J(\eta(t),U(\xi(t)))\frac{d U(\xi(t))}{d \xi(t)}\dot{\xi}(t)$, and update $\mu(t)$ according to the following rule:\begin{align*}
\dot{\mu}(t) &= \mathbb{P}_M[\mu(t) + \alpha D_{\mu}(t) + v_{\mu}(t)] - \mu(t),
\end{align*} where $\alpha > 0$ and $D_{\mu}(t) \triangleq \nabla_{\mu}\HH(z(t),\mu(t),U(\xi(t)))$. The operator then informs player~$i$ the information of $(v^{[i]}_p(t),v^{[i]}_d(t),\mu(t))$.
\STATE Each primal (resp. dual) player $i$ updates $z^{[i]}(t) = (\hat{\beta}^{[i]}(t),\hat{\alpha}^{[i]}(t))$ (resp. $\lambda^{[i]}(t)$) according to the following rule:\begin{align*} \dot{z}^{[i]}(t) &= \mathbb{P}_{Z_i}[z^{[i]}(t) - \alpha D^{[i]}_z(t) + v^{[i]}_p(t)] - z^{[i]}(t),\\
\dot{\lambda}^{[i]}(t) &= \mathbb{P}_{\Lambda_i}[\lambda^{[i]}(t) + \alpha D^{[i]}_{\lambda}(t) + v^{[i]}_d(t)] - \lambda^{[i]}(t),
\end{align*} where $D^{[i]}_x(t) \triangleq \nabla_{x^{[i]}}\LL_i(z(t),\mu(t),\lambda^{[i]}(t),U(\xi(t)))$ and $D^{[i]}_{\mu}(t) \triangleq \nabla_{\mu^{[i]}}\LL_i(z(t),\mu(t),\lambda^{[i]}(t),U(\xi(t)))$.
\STATE Each player~$i$ generates and implements the control commands $(\alpha^{[i]}(t),\beta^{[i]}(t)) = \mathbb{Q}_i(b^{[i]}(t))$.
\end{algorithmic}
\end{algorithm}

\subsection{The closed-loop system and its performance analysis}

The closed-loop system consists of the user queueing network~\eqref{e18},~\eqref{e14}, the vehicle queueing network~\eqref{e6} and the real-time game theoretic coordinator (Algorithm~\ref{ta:algo2}). For the sake of completeness, we summarize the closed-loop system in Algorithm~\ref{ta:algo3}.

\begin{algorithm}[htbp]\caption{The closed-loop system} \label{ta:algo3}

\begin{algorithmic}[1]
\STATE The dynamics of the user queueing network: \begin{align*}\dot{Q}(t) &= h_Q(c(t),u(t)),\\
\dot{c}(t) &= h_c(c(t),Q(t),t),\end{align*} where the controller $u_{\kappa}(t) = \sum_{i\in V}\beta^{[i]}_{\kappa}(t)$ with $(\alpha^{[i]}(t),\beta^{[i]}(t)) = \mathbb{Q}_i(b^{[i]}(t))$.
\STATE The dynamics of the vehicle queueing network: \begin{align*}\dot{v}_{\kappa}(t) = 0.\end{align*}
\STATE The real-time game theoretic coordinator: \begin{align*}\dot{\eta}(t) &= \mathbb{P}_{K}[\eta(t)-\alpha D(t) + J(\eta(t),\zeta(t))\frac{d U(\xi(t))}{d \xi(t)}\dot{\xi}(t)]\nnum\\
&- \eta(t).
\end{align*}
\end{algorithmic}
\end{algorithm}

%


The following theorem summarizes the performance of the closed-loop system.

\begin{theorem} Suppose Assumptions~\ref{asm1},~\ref{asm7} and~\ref{asm4} hold. Suppose the following holds: \begin{align}\vartheta &\triangleq \big(1-\alpha\rho_{\Omega}+\alpha^2L_{\Omega}^2+(L_JD_u^{(1)}\delta_{\xi})^2\nnum\\
&+(1+\alpha L_{\Omega})L_JD_u^{(1)}\delta_{\xi}\big)^{\frac{1}{2}}<1.\nnum\end{align} The estimates $\beta^{[i]}(t)$ and $\alpha^{[i]}(t)$ generated by Algorithm~\ref{ta:algo2} approximates $\tilde{\eta}(t) = {\mathbb{X}}_{\rm RG}(\zeta(t))$ in the way that \begin{align} &\lim_{t\rightarrow+\infty}\|\beta^{[i]}(t) - \tilde{\beta}^{[i]}(t)\| \rightarrow 0,\nnum\\
&\limsup_{t\rightarrow+\infty}\|\alpha^{[i]}(t) - \tilde{\alpha}^{[i]}(t)\| \leq \|A\|\varsigma_G(\epsilon,\tau),\label{e41}\end{align} Furthermore, the queue dynamics achieves the following: \begin{align}\limsup_{t\rightarrow+\infty}\|Q_{\kappa}(t) - \bar{Q}_{\kappa}\|\leq \max\{\Delta_{\min},\Delta_{\max}\},\label{e40}\end{align} where $\Delta_{\min} \triangleq \ln (1+\frac{2\varsigma_G(\epsilon,\tau)}{\beta_{\max}-c_{\max}-a}
(1+\frac{\beta_{\max}-c_{\max}-a}{c_{\min}}e^{\bar{Q}_{\kappa}}))$ and $\Delta_{\max} \triangleq -\ln (1-\frac{2\varsigma_G(\epsilon,\tau)}{\beta_{\max}-c_{\max}-a})$.\label{the2}
\end{theorem}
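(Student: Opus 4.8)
The plan is to treat the closed loop as a cascade of a \emph{cyber-layer tracking} problem and a \emph{physical-layer regulation} problem, tied together by the uniform bound $\|\dot{\xi}(t)\|\le\delta_{\xi}$ (the consequence of Assumptions~\ref{asm1} and~\ref{asm7} noted after the assumptions). First I would control the tracking error $e(t)\triangleq\eta(t)-\tilde{\eta}(t)$ between the running estimate and the RG equilibrium; then I would convert this into the stated bounds on $\beta^{[i]}$ and $\alpha^{[i]}$ through the projection $\mathbb{Q}_i$; finally I would feed the resulting control error into the queue dynamics to obtain~\eqref{e40}. Writing $\Phi(\eta,t)\triangleq\mathbb{P}_K[\eta-\alpha\nabla\Omega(\eta,\zeta)+v(\eta,t)]$ with $v(\eta,t)=J(\eta,\zeta)\tfrac{dU}{d\xi}\dot{\xi}$, the coordinator of Algorithm~\ref{ta:algo3} reads $\dot{\eta}=\Phi(\eta,t)-\eta$. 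The structural fact I would exploit is that $\tilde{\eta}(t)$ is an \emph{interior stationary point} of the regularized game: the first-order conditions~\eqref{e76} give $\nabla\Omega(\tilde{\eta},\zeta)=0$, while Proposition~\ref{pro1} (P4) and (P5) place $\tilde{\eta}$ in the interior of $K$, so $\mathbb{P}_K$ is locally the identity there. Together with Lemma~\ref{lem4}, which identifies $v(\tilde{\eta},t)=\dot{\tilde{\eta}}(t)$, this yields the compensation identity $\Phi(\tilde{\eta},t)-\tilde{\eta}=\dot{\tilde{\eta}}$.

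Along $V(t)\triangleq\tfrac12\|e(t)\|^2$ the compensation identity gives
\[
\dot{V}=\langle e,\Phi(\eta,t)-\eta-\dot{\tilde{\eta}}\rangle=\langle e,\Phi(\eta,t)-\Phi(\tilde{\eta},t)\rangle-\|e\|^2,
\]
so everything reduces to the contraction estimate $\|\Phi(\eta,t)-\Phi(\tilde{\eta},t)\|\le\vartheta\|e\|$. Using the non-expansiveness of $\mathbb{P}_K$, I would split the argument into the monotone part $(\eta-\tilde{\eta})-\alpha(\nabla\Omega(\eta)-\nabla\Omega(\tilde{\eta}))$ and the feedforward mismatch $v(\eta,t)-v(\tilde{\eta},t)$. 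Strong monotonicity with constant $\rho_{\Omega}$ (Lemma~\ref{lem1}) and the Lipschitz constant $L_{\Omega}$ of $\nabla\Omega$ (Lemma~\ref{lem5}) bound the monotone part, producing the terms $-\alpha\rho_{\Omega}$ and $\alpha^2L_{\Omega}^2$, while the Lipschitz property of $J(\eta)\tfrac{dU}{d\xi}$ (Lemma~\ref{lem5}) with $\|\dot{\xi}\|\le\delta_{\xi}$ and $\|\tfrac{dU}{d\xi}\|\le D_u^{(1)}$ bounds the mismatch by $L_JD_u^{(1)}\delta_{\xi}\|e\|$. Expanding $\|a+b\|^2$ and collecting the square and cross terms reproduces exactly the constant $\vartheta$ of the statement. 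Since $\vartheta<1$, we get $\dot{V}\le-(1-\vartheta)\|e\|^2=-2(1-\vartheta)V$, hence $e(t)\to0$; in particular $\beta^{[i]}(t)=\hat{\beta}^{[i]}(t)\to\tilde{\beta}^{[i]}(t)$, which is the first line of~\eqref{e41}.

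For the second line, $\hat{\alpha}^{[i]}(t)\to\tilde{\alpha}^{[i]}(t)$ and $\hat{\beta}^{[i]}(t)\to\tilde{\beta}^{[i]}(t)$ force the residual $A\hat{\alpha}^{[i]}(t)-b^{[i]}(t)$ to converge to the vehicle-balance value $h^{[i]}(\tilde{z}^{[i]})$ at the equilibrium, which Proposition~\ref{pro1} (P1) bounds by $\varsigma_h(\epsilon,\tau)\le\varsigma_G(\epsilon,\tau)$; the closed form of $\mathbb{Q}_i$ in Remark~\ref{rem3} gives $\alpha^{[i]}-\hat{\alpha}^{[i]}=-A(A\hat{\alpha}^{[i]}-b^{[i]})$, whence $\limsup_t\|\alpha^{[i]}-\tilde{\alpha}^{[i]}\|\le\|A\|\varsigma_G(\epsilon,\tau)$. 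For the physical layer, $\beta^{[i]}\to\tilde{\beta}^{[i]}$ implies $u_{\kappa}(t)=\sum_i\beta^{[i]}_{\kappa}(t)\to U_{\kappa}(\xi_{\kappa})+G_{\kappa}(\tilde{\beta})$ with $|G_{\kappa}(\tilde{\beta})|\le\varsigma_G(\epsilon,\tau)$ by Proposition~\ref{pro1} (P2). Substituting $u_{\kappa}=U_{\kappa}+\epsilon_{\kappa}$ with $\limsup_t|\epsilon_{\kappa}|\le\varsigma_G$ into~\eqref{e10} and using $c_{\kappa}-U_{\kappa}=\hat{U}_{\kappa}(Q_{\kappa})$ from~\eqref{e21} gives $\dot{Q}_{\kappa}=\hat{U}_{\kappa}(Q_{\kappa})-\epsilon_{\kappa}$. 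Taking $\tfrac12(Q_{\kappa}-\bar{Q}_{\kappa})^2$ as a Lyapunov function and recalling from~\eqref{e17} that $(Q_{\kappa}-\bar{Q}_{\kappa})\hat{U}_{\kappa}(Q_{\kappa})<0$ for $Q_{\kappa}\neq\bar{Q}_{\kappa}$, its derivative is negative whenever $|\hat{U}_{\kappa}(Q_{\kappa})|>|\epsilon_{\kappa}|$; solving $\hat{U}_{\kappa}(Q_{\kappa})=\pm\varsigma_G(\epsilon,\tau)$ for the two signs of the perturbation with the explicit logistic form of $\hat{U}_{\kappa}$ delimits a positively invariant neighborhood whose two one-sided radii are precisely $\Delta_{\min}$ and $\Delta_{\max}$, and an invariance argument then yields~\eqref{e40}.

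The crux is the cyber-layer contraction. First, I must justify that the feedforward $v$, which sits \emph{inside} the projection, exactly cancels the drift $\dot{\tilde{\eta}}$ of the moving target; this rests on the interior stationarity of the RG equilibrium so that $\mathbb{P}_K$ acts as the identity at $\tilde{\eta}$, which in turn needs the interior margin of $\tilde{\eta}$ in $K$ to dominate $\|\dot{\tilde{\eta}}\|$. Second, I must close the cyber--physical feedback loop, which is possible only because $\|\dot{\xi}\|$ admits the \emph{uniform} bound $\delta_{\xi}$ independent of the (possibly large) instantaneous error $e(t)$, so that the moving-target analysis is self-contained. Assembling the precise constant $\vartheta$ from the non-expansiveness, strong-monotonicity, and Lipschitz estimates is then the remaining bookkeeping.
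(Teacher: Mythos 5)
Your architecture is the paper's own: the compensation identity you derive is exactly the paper's equation~\eqref{e1} (using $\nabla\Omega(\tilde{\eta},\zeta)=0$ from~\eqref{e76}, interiority of the RG equilibrium from (P4)--(P5), and $v(\tilde{\eta},t)=\dot{\tilde{\eta}}(t)$ from Lemma~\ref{lem4}); the Lyapunov contraction via non-expansiveness of $\mathbb{P}_K$, strong monotonicity $\rho_{\Omega}$ (Lemma~\ref{lem1}) and the Lipschitz constants of Lemma~\ref{lem5} is the paper's Claim~5; and the queue analysis with perturbed controller $u_{\kappa}=U_{\kappa}+\Delta_{\kappa}$, $\limsup|\Delta_{\kappa}|\leq\varsigma_G(\epsilon,\tau)$, is the paper's Claim~7. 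Your derivation of the second line of~\eqref{e41} via Remark~\ref{rem3} and (P1) is in fact more explicit than anything in the paper, which leaves that bound implicit; and your clean inequality $\dot{V}\leq-2(1-\vartheta)V$ is tidier than the paper's Claim~5, which carries undefined remainders $\sigma$ and $\Upsilon(t)$.

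There is, however, one genuine omission: you apply $L_{\Omega}$ and $L_J$, which Lemma~\ref{lem5} furnishes only over the compact set $Y$, without ever showing that the trajectory $\eta(t)$ remains in $Y$. This is not bookkeeping. The barrier terms make the partial gradients contain $\tau/\mu_{\ell}$ and $\tau/\lambda^{[i]}_{\ell}$, which blow up as $\mu_{\ell},\lambda^{[i]}_{\ell}\rightarrow 0^+$, and the projection set $K=Z\times M\times\Lambda$ \emph{contains} the boundary $\mu_{\ell}=0$, so $\mathbb{P}_K$ alone provides no lower bound; likewise $\nabla\Omega$ is not globally Lipschitz on $K$. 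The paper spends Claims~1--4 on precisely this: the uniform bound $\|v(t)\|\leq\delta_v$, the upper bounds $\mu_{\kappa}(t)\leq\Delta_{\mu}$, $\lambda^{[i]}_{\kappa}(t)\leq\Delta_{\lambda}$ by invariance, and then — via the repulsion of the barrier gradients near the boundary, in arguments analogous to (P5) — the uniform lower bounds $\beta^{[i]}_{\kappa}(t)\geq\delta^{[i]}_{\beta,\min}$, $\alpha^{[i]}_{\kappa\kappa'}(t)\geq\delta^{[i]}_{\alpha,\min}$, $\mu_{\kappa}(t)\geq\delta_{\mu,\min}=\tau/(\epsilon+d_{\mu})$ and $\lambda^{[i]}_{\kappa}(t)\geq\delta_{\lambda,\min}$, which together confine $\eta(t)$ to $Y$. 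You correctly flagged the dual requirement for the \emph{target} (the interior margin of $\tilde{\eta}$ in $K$ must dominate $\|\dot{\tilde{\eta}}\|\leq\delta_v$) but never establish the corresponding invariance for the \emph{running estimate} $\eta(t)$; without it, the bound $\|D(t)-u(t)\|\leq L_{\Omega}\|\eta(t)-\tilde{\eta}(t)\|$ and hence your contraction estimate are unjustified. A minor further caveat: an exact expansion of $\|a+b+c\|^2$ puts factors of $2$ on the cross terms, so a careful redo yields $1-2\alpha\rho_{\Omega}+\cdots$ rather than literally the stated $\vartheta^2$; the paper drops the same factors, so your ``exact'' match with $\vartheta$ should be read modulo that shared bookkeeping slip.
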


\begin{remark} Recall that $\rho_{\Omega} = \min\{\min_{i\in V}\{\rho_i,\rho_i'\},\epsilon\}$ and $\delta_{\xi}$ represents an upper bound on the maximum variation of the user arrival rate. If $\delta_{\xi}$ is smaller, we can choose a set of smaller $\alpha$, $\epsilon$ and $\tau$ to satisfy $\vartheta < 1$ and reduce the right-hand sides of~\eqref{e41} and~\eqref{e40}. That is, if the maximum variation of the user arrival rates is smaller, the steady-state system performance can be improved.\oprocend\label{rem6}
\end{remark}

\begin{proof} We divide the proof into several claims.

\textbf{Claim 1:} It holds that $\|v(t)\| \leq \delta_v$ for all $t\geq0$.

\begin{proof} It is noted that \begin{align}&\|v(t)\|\leq \|J(\eta(t),U(\xi(t)))\|\|\frac{d U(\xi(t))}{d \xi(t)}\|\|\dot{\xi}(t)\|\nnum\\
&\leq \frac{\|J_N\| D_u^{(1)}\delta_{\xi}}{\epsilon\min_{i\in V}\{\rho_i,\rho_i'\}} = \delta_v,\nnum\end{align} where we use Lemma~\ref{lem2}, the relation~\eqref{e48} and Lemma~\ref{lem4}.
\end{proof}

\textbf{Claim 2:} It holds that $\mu_{\kappa}(t)\in [0,\Delta_{\mu}]$ and $\lambda_{\kappa}^{[i]}(t)\in [0,\Delta_{\lambda}]$ for all $t\geq0$.

\begin{proof} The dynamics associated with $\mu_{\kappa}$ can be written as follows: \begin{align}\dot{\mu}_{\kappa}(t) = -\mu_{\kappa}(t) + d_{\kappa}(t),\label{e30}
\end{align} where $d_{\kappa}(t)\in[0,\Delta_{\mu}]$ for all $t\geq0$. It is readily to see that $[0,\Delta_{\mu}]$ is an invariant set of~\eqref{e30} and thus $\mu_{\kappa}(t)\in[0,\Delta_{\mu}]$ for all $t\geq0$. Analogously, one can verify that $\alpha_{\kappa}^{[i]}(t)\in[0,\Delta_{\alpha}]$ for all $t\geq0$.\end{proof}

\textbf{Claim 3:} It holds that $\delta^{[i]}_{\beta,\min}
\leq\beta^{[i]}_{\kappa}(t)\leq \delta^{[i]}_{\beta,\max}$ and
$\delta^{[i]}_{\alpha,\min}
\leq\alpha^{[i]}_{\kappa\kappa'}(t)\leq \delta^{[i]}_{\alpha,\max}$ for all $i\in V$ and $t\geq0$.

\begin{proof} The dynamics associated with $\beta_{\kappa}^{[i]}$ can be written as follows: \begin{align}&\dot{\beta}_{\kappa}^{[i]}(t) = \frac{\tau}{\beta^{[i]}_{\kappa}(t)} - \frac{\tau}{\beta_{\max} - \beta^{[i]}_{\kappa}(t)} + d^{[i]}_{\kappa}(t),\label{e56}\end{align}
where
\begin{align}d^{[i]}_{\kappa}(t) &= -\nabla_{\beta^{[i]}_{\kappa}} f_i(z^{[i]}(t)) + \sum_{\ell=1}^m\mu_{\ell}(t) \nabla_{\beta^{[i]}_{\kappa}} G_{\ell}(\beta(t),U(\xi(t)))\nnum\\
&+ \sum_{\ell=1}^{p}\lambda^{[i]}_{\ell}(t) \nabla_{\beta^{[i]}_{\kappa}} h^{[i]}_{\ell}(z^{[i]}(t)) + v^{[i]}_{\kappa}(t).\nnum\end{align}

Note that $d^{[i]}_{\kappa}(t) \leq d_{\beta}$. Analogous to (P5) of Proposition~\ref{pro1}, one can show that $\delta^{[i]}_{\beta,\min}
\leq\beta^{[i]}_{\kappa}(t)\leq \delta^{[i]}_{\beta,\max}$. Analogously, it holds that
$\delta^{[i]}_{\alpha,\min}
\leq\alpha^{[i]}_{\kappa\kappa'}(t)\leq \delta^{[i]}_{\alpha,\max}$.
\end{proof}

\textbf{Claim 4:} It holds that $\mu_{\kappa}(t)\in[\delta_{\mu,\min},\delta_{\mu,\max}]$ and $\lambda_{\kappa}(t)\in[\delta_{\lambda,\min},\delta_{\lambda,\max}]$.

\begin{proof} The dynamics associated with $\mu_{\kappa}$ can be written as follows: \begin{align}\dot{\mu}_{\kappa}(t) &= \mathbb{P}_M[\mu_{\kappa}(t)-(\epsilon\mu_{\kappa}(t) - G_{\kappa}(\beta(t),\zeta_{\kappa}(t)) - \frac{\tau}{\mu_{\kappa}(t)})\nnum\\
&+ v_{\kappa}(t)] -\mu_{\kappa}(t).\end{align}\end{proof}

Let $u(t) = \nabla\Omega(\tilde{\eta}(t),\zeta(t))$, and $\chi(t) \triangleq \frac{d U(\xi(t))}{d \xi(t)}$. From the definition of $K$ one can see that $\mathbb{P}_{K}$ never applies and thus, \begin{align}&\mathbb{P}_{K}[\tilde{\eta}(t)-\alpha u(t) + J(\tilde{\eta}(t))\chi(t)\dot{\xi}(t)]\nnum\\
&= \tilde{\eta}(t)-\alpha u(t) + J(\tilde{\eta}(t))\chi(t)\dot{\xi}(t).\nnum\end{align}
This implies the following temporal evolution of $\tilde{\eta}(t)$: \begin{align}&\frac{d\tilde{\eta}(t)}{d t} = J(\tilde{\eta}(t))\chi(t)\dot{\xi}(t)\nnum\\
&= \mathbb{P}_{K}[\tilde{\eta}(t)-\alpha u(t) + J(\tilde{\eta}(t))\chi(t)\dot{\xi}(t)] - \tilde{\eta}(t).\label{e1}\end{align}

The combination of Lemma~\ref{lem5} and Claims 5 and 6 implies that
\begin{align}&\|J(\eta(t))-J(\tilde{\eta}(t))\| \leq L_J \|\eta(t)-\tilde{\eta}(t)\|,\label{e64}\\
&\|D(t)-u(t)\| \leq L_{\Omega}\|\eta(t)-\tilde{\eta}(t)\|.\label{e53}\end{align}

We recall the regulator for players as follows: \begin{align}\dot{\eta}(t) = \mathbb{P}_{K}[\eta(t)-\alpha D(t) + J(\eta(t),\zeta(t))\chi(t)\dot{\xi}(t)] - \eta(t).\label{e39}
\end{align}

Choose the Lyapunov function candidate $W(\eta(t),\tilde{\eta}(t)) \triangleq \frac{1}{2}\|\eta(t)-\tilde{\eta}(t)\|^2$ for system~\eqref{e39}. The following claim provides an estimate of $\dot{W}$.

\textbf{Claim 5:} The following estimate holds: \begin{align}\dot{W} &\leq 2(-1+(\vartheta+\sigma)^{\frac{1}{2}})W+\Upsilon(t),\label{e24}\end{align}

\begin{proof} It follows from~\eqref{e1} and~\eqref{e39} that \begin{align}\dot{W} &= \langle \eta(t)-\tilde{\eta}(t), \dot{\eta}(t)-\frac{d\tilde{\eta}(t)}{d t}\rangle\nnum\\
&=-\|\eta(t)-\tilde{\eta}(t)\|^2 + \Psi(t),\label{e36}\end{align}
where the term $\Psi(t)$ is given by: \begin{align*}&\Psi(t)\triangleq \langle \eta(t)-\tilde{\eta}(t),\nnum\\ &\mathbb{P}_{K}[\eta(t)-\alpha D(t) + J(\eta(t),\zeta(t))\chi(t)\dot{\xi}(t)]\nnum\\
&- \mathbb{P}_{K}[\tilde{\eta}(t)-\alpha u(t) + J(\tilde{\eta}(t))\chi(t)\dot{\xi}(t)]\rangle.\end{align*}

By the non-expansiveness property of $\mathbb{P}_{K}$, we have \begin{align}&\|\mathbb{P}_{K}[\eta(t)-\alpha D(t) + J(\eta(t),\zeta(t))\chi(t)\dot{\xi}(t)]\nnum\\
&- \mathbb{P}_{K}[\tilde{\eta}(t)-\alpha u(t) + J(\tilde{\eta}(t))\chi(t)\dot{\xi}(t)]\|^2\nnum\\
&\leq \|(\eta(t)-\tilde{\eta}(t))-\alpha (D(t)-u(t))\nnum\\ &+(J(\eta(t),\zeta(t))\chi(t)\dot{\xi}(t)
-J(\tilde{\eta}(t))\chi(t)\dot{\xi}(t))\|^2\nnum\\
&\leq \|\eta(t)-\tilde{\eta}(t)\|^2 -\alpha\langle \eta(t)-\tilde{\eta}(t), D(t)-u(t)\rangle\nnum\\
&+ \alpha^2\|D(t)-u(t)\|^2\nnum\\
&+\|J(\eta(t),\zeta(t))\chi(t)\dot{\xi}(t)-J(\tilde{\eta}(t))\chi(t)\dot{\xi}(t)\|^2\nnum\\
&+\|\eta(t)-\tilde{\eta}(t)\|\nnum\\
&\times\|J(\eta(t),\zeta(t))\chi(t)\dot{\xi}(t)
-J(\tilde{\eta}(t))\chi(t)\dot{\xi}(t)\|\nnum\\
&+\alpha\|D(t)-u(t)\|\nnum\\
&\times\|J(\eta(t),\zeta(t))\chi(t)\dot{\xi}(t)
-J(\tilde{\eta}(t))\chi(t)\dot{\xi}(t)\|.\label{e2}\end{align}

By the strong monotonicity, we have the following for the second term on the right-hand side of~\eqref{e2}: \begin{align}\langle \eta(t)-\tilde{\eta}(t), -(D(t) - u(t))\rangle \leq -\rho_{\Omega}\|\eta(t)-\tilde{\eta}(t)\|^2.\label{e52}\end{align}

We have the following for the last three terms on the right-hand side of~\eqref{e2}:
\begin{align}&\|J(\eta(t),\zeta(t))\chi(t)\dot{\xi}(t)
-J(\tilde{\eta}(t))\chi(t)\dot{\xi}(t)\|\nnum\\
&\leq L_J\|\chi(t)\dot{\xi}(t)\|\|\eta(t)-\tilde{\eta}(t)\|.\label{e3}\end{align}

Substitute~\eqref{e52},~\eqref{e53} and~\eqref{e3} into~\eqref{e2}. After grouping, we have the following: \begin{align}&\|\mathbb{P}_{K}[\eta(t)-\alpha D(t) + J(\eta(t),\zeta(t))\chi(t)\dot{\xi}(t)]\nnum\\
&- \mathbb{P}_{K}[\tilde{\eta}(t)-\alpha u(t) + J(\tilde{\eta}(t))\chi(t)\dot{\xi}(t)]\|^2\nnum\\
&\leq \big(1-\alpha\rho_{\Omega}+\alpha^2L_{\Omega}^2
+L_J^2(D_u^{(1)}\delta_{\xi})^2\nnum\\
&+(1+\alpha L_{\Omega})L_JD_u^{(1)}\delta_{\xi}\big)\|\eta(t)-\tilde{\eta}(t)\|^2.\end{align}
This estimate further implies the following estimate of $\Psi(t)$: \begin{align}&\|\Psi(t)\|\leq(1-\alpha\rho_{\Omega}
+\alpha^2L_{\Omega}^2+L_J^2(D_u^{(1)}\delta_{\xi})^2\nnum\\
&+(1+\alpha L_{\Omega})L_J(D_u^{(1)}\delta_{\xi}))^{\frac{1}{2}}
\|\eta(t)-\tilde{\eta}(t)\|^2.\label{e8}\end{align}
\end{proof}

%

Claim 4 immediately implies that the following convergence property: \begin{align}\lim_{t\rightarrow+\infty}\|\eta(t)-\tilde{\eta}(t)\| = 0.\label{e25}\end{align} From Proposition~\ref{pro1}, it follows that $\tilde{\eta}(t)\in \mathbb{X}_{\rm RG}(\zeta(t))$ is an approximation of $\mathbb{X}_{\rm C}(\zeta(t))$. Since the function $[G_{\kappa}(\cdot)]^+$ is continuous, thus it follows from Proposition~\ref{pro1} and~\eqref{e25} that \begin{align}\limsup_{t\rightarrow+\infty}\|G_{\kappa}(x(t),\zeta(t))\| \leq \varsigma_G(\epsilon,\tau).\label{e26}\end{align} Hence, the controller for the queue network can be written in the following way: \begin{align}u_{\kappa}(t) = U_{\kappa}(\xi(t)) + \Delta_{\kappa}(t),\label{e27}\end{align} where the perturbation term $\Delta(t)$ satisfies \begin{align*}\limsup_{t\rightarrow+\infty}\|\Delta_{\kappa}(t)\| \leq \varsigma_G(\epsilon,\tau).\end{align*}

\textbf{Claim 7:} The relation~\eqref{e40} holds.

\begin{proof} Consider the Lyapunov function candidate $V_{\kappa}(Q_{\kappa}) = \frac{1}{2}(Q_{\kappa}-\bar{Q}_{\kappa})^2$. Its Lie derivative along~\eqref{e10} is given by: \begin{align}\frac{d V_{\kappa}}{d Q_{\kappa}}\dot{Q}_{\kappa} &= (Q_{\kappa}(t)-\bar{Q}_{\kappa})\dot{Q}_{\kappa}\nnum\\
&= (Q_{\kappa}(t)-\bar{Q}_{\kappa})(\hat{U}_{\kappa}(\xi(t))+\Delta_{\kappa}(t))\nnum\\
&= (Q_{\kappa}(t)-\bar{Q}_{\kappa})
\big(\chi(e^{-(Q_{\kappa}(t)-\bar{Q}_{\kappa})}-1)+\Delta_{\kappa}(t)\big).\label{e32}
\end{align}

When $Q_{\kappa}(t)-\bar{Q}_{\kappa} \geq \Delta_{\max}$, we have \begin{align}&\frac{\beta_{\max}-c_{\max}-a}{1+\frac{\beta_{\max}-c_{\max}-a}{c_{\min}}e^{-(Q_{\kappa}(t)-\bar{Q}_{\kappa})}}
(e^{-(Q_{\kappa}(t)-\bar{Q}_{\kappa})}-1)+\Delta(t)\nnum\\
&\leq (\beta_{\max}-c_{\max}-a)(e^{-(Q_{\kappa}(t)-\bar{Q}_{\kappa})}-1) + \varsigma_G(\epsilon,\tau)\leq -\varsigma_G(\epsilon,\tau).\nnum\end{align} Hence, the following holds for $Q_{\kappa}(t)-\bar{Q}_{\kappa} \geq -\ln (1-\frac{2\varsigma_G(\epsilon,\tau)}{\beta_{\max}-c_{\max}-a})$: \begin{align}\frac{d V_{\kappa}}{d Q_{\kappa}}\dot{Q}_{\kappa} \leq -\varsigma_G(\epsilon,\tau)\Delta_{\max}.\label{e33}\end{align}

Analogously, when $Q_{\kappa}(t)-\bar{Q}_{\kappa} \leq -\Delta_{\min}$, we have \begin{align}&\frac{\beta_{\max}-c_{\max}-a}{1+\frac{\beta_{\max}-c_{\max}-a}{c_{\min}}e^{-(Q_{\kappa}(t)-\bar{Q}_{\kappa})}}
(e^{-(Q_{\kappa}(t)-\bar{Q}_{\kappa})}-1)+\Delta_{\kappa}(t)\nnum\\
&\geq \frac{\beta_{\max}-c_{\max}-a}{1+\frac{\beta_{\max}-c_{\max}-a}{c_{\min}}e^{-\bar{Q}_{\kappa}}}
(e^{-(Q_{\kappa}(t)-\bar{Q}_{\kappa})}-1) - \varsigma_G(\epsilon,\tau)\nnum\\
&\geq \varsigma_G(\epsilon,\tau).\nnum\end{align} Hence, the following holds for $Q_{\kappa}(t)-\bar{Q}_{\kappa} \leq -\Delta_{\min}$: \begin{align}\frac{d V_{\kappa}}{d Q_{\kappa}}\dot{Q}_{\kappa} \leq -\varsigma_G(\epsilon,\tau)\Delta_{\min}.\label{e34}\end{align}

The combination of~\eqref{e33},~\eqref{e34} establishes the desired result of~\eqref{e40}.
\end{proof}
This completes the proof for Theorem~\ref{the2}.
\end{proof}

\section{Conclusions}

In this paper, we have introduced a model of competitive MoD systems and proposed a real-time game theoretic coordination problem for the system. We have came up with an algorithm to achieve vehicle balance and practical regulation of the user queueing network.

\section{Appendix}\label{sec:appendix}

\subsection{Notations}

In this section, we summarize the notations used in Sections~\ref{sec:preliminaries} and~\ref{sec:main}.

\subsubsection{Notations for Section~\ref{sec:preliminaries}}

Denote $Z_i \triangleq \{z^{[i]}\in\real^{n_i}\;|\;\beta^{[i]}_{\kappa}\in[a,\beta^{[i]}_{\max}-a],\quad \kappa\in\mathbb{S},\quad\alpha^{[i]}_{\kappa\kappa'}\in[a,\alpha^{[i]}_{\max}-a],\quad (\kappa,\kappa')\in\EE_{\mathbb{S}}\}$, $Z\triangleq \prod_{i\in V}Z_i$, $\hat{Z}_i \triangleq \{z^{[i]}\in\real^{n_i}\;|\;\beta^{[i]}_{\kappa}\in[0,\beta^{[i]}_{\max}],\quad \kappa\in\mathbb{S},\quad\alpha^{[i]}_{\kappa\kappa'}\in[0,\alpha^{[i]}_{\max}],\quad (\kappa,\kappa')\in\EE_{\mathbb{S}}\}$ and $\hat{Z}\triangleq \prod_{i\in V}\hat{Z}_i$.

\begin{align*} &D_U^{(1)} \triangleq |\mathbb{S}|(1+\frac{\beta_{\max}-c_{\max}+\frac{c_{\min}}{2}-Na}{(1+2\frac{\beta_{\max}-c_{\max}-Na}{c_{\min}})^2}\nnum\\
&\times \frac{2(\beta_{\max}-c_{\max}-Na)}{c_{\min}}e^{\max_{\kappa\in \mathbb{S}}\bar{Q}_{\kappa}}),\\
&D_U^{(2)} \triangleq \frac{(\beta_{\max}-c_{\max}+\frac{c_{\min}}{2}-Na)\frac{2(\beta_{\max}-c_{\max}-Na)}{c_{\min}}}{(1+2\frac{\beta_{\max}-c_{\max}-Na}{c_{\min}})^4}\nnum\\
&\times(1+(\frac{2(\beta_{\max}-c_{\max}-Na)}{c_{\min}}e^{\max_{\kappa\in\mathbb{S}}\bar{Q}_{\kappa}})^2)e^{\max_{\kappa\in\mathbb{S}}\bar{Q}_{\kappa}},\\
\end{align*}

\begin{align*}g(s)&\triangleq \frac{1}{2\epsilon}(s+\sqrt{s^2+4\epsilon\tau}),\\
\delta_i&\triangleq \sup_{z^{[i]}\in Z_i}f_i(z^{[i]}) - \inf_{z^{[i]}\in Z_i}f_i(z^{[i]})\\&+ 2\tau(|\mathbb{S}|\psi(\beta_{\max})+|\mathcal{E}_{\mathbb{S}}|\psi(\alpha_{\max})),\\ \varsigma_h(\epsilon,\tau)&\triangleq\max_{i\in V}\sqrt{\epsilon(\delta_i+(p-1)\tau)},\\
\varsigma_G(\epsilon,\tau)&\triangleq\max_{i\in V}\sqrt{N\epsilon(\delta_i+p\tau)},\\
\delta_i'&\triangleq \sup_{z^{[i]}\in Z_i}\|\nabla_{\beta^{[i]}_{\kappa}} f_i(z^{[i]})\|\\
&+ 2|\mathbb{S}|\max\{g(-\varsigma_G(\epsilon,\tau)),g(\varsigma_G(\epsilon,\tau))\}\\
&+ 2|\mathbb{S}|\max\{g(-\varsigma_h(\epsilon,\tau)),g(\varsigma_h(\epsilon,\tau))\},\\
\delta_i''&\triangleq \sup_{z^{[i]}\in Z_i}\|\nabla_{\alpha^{[i]}_{\kappa\kappa'}} f_i(z^{[i]})\|\\
&+2|\mathbb{S}|\max\{g(-\varsigma_h(\epsilon,\tau)),g(\varsigma_h(\epsilon,\tau))\}.\end{align*}

\begin{align*}&\delta_v \triangleq (\epsilon\min_{i\in V}\{\rho_i,\rho_i'\})^{-1}\|J_N\| D_u^{(1)}\delta_{\xi},\\
&\Delta_{\mu} \triangleq g(\varsigma_G(\epsilon,\tau)) + (\epsilon\min_{i\in V}\{\rho_i,\rho_i'\})^{-1}\|J_N\|D_u^{(1)}\delta_{\xi},\\ &\Delta_{\lambda} \triangleq g(\varsigma_h(\epsilon,\tau)) + (\epsilon\min_{i\in V}\{\rho_i,\rho_i'\})^{-1}\|J_N\|D_u^{(1)}\delta_{\xi},\\
d_{\beta} &\triangleq \max_{i\in V}\sup_{z^{[i]}\in Z_i}\|\nabla_{\beta^{[i]}_{\kappa}} f_i(z^{[i]})\| + 2|\mathbb{S}|(\Delta_{\mu} + \Delta_{\lambda}),\\
d_{\alpha} &\triangleq \max_{i\in V}\sup_{z^{[i]}\in Z_i}\|\nabla_{\alpha^{[i]}_{\kappa\kappa'}} f_i(z^{[i]})\| + 2|\mathbb{S}|\Delta_{\lambda},\\
d_{\mu}&\triangleq \max_{\kappa\in\mathbb{S}}\sup_{z\in Z,\zeta_\kappa\in[\frac{c_{\min}}{2},\beta_{\max}-Na]}
G_{\kappa}(\beta,\zeta_{\kappa})+\delta_v,\\
d_{\lambda}&\triangleq \max_{i\in V}\max_{\kappa\in\mathbb{S}}\sup_{z^{[i]}\in Z_i}h^{[i]}_{\kappa}(z)+\delta_v.\end{align*}

\begin{align*}
&\delta^{[i]}_{\beta,\min} \triangleq \frac{\tau\beta^{[i]}_{\max}}{2\tau+d_{\beta}\beta^{[i]}_{\max}},\quad
\delta^{[i]}_{\beta,\max} = \beta^{[i]}_{\max}-\frac{\tau\beta^{[i]}_{\max}}{2\tau+d_{\beta}\beta^{[i]}_{\max}},\\
&\delta^{[i]}_{\alpha,\min} \triangleq \frac{\tau\alpha^{[i]}_{\max}}{2\tau+d_{\alpha}\alpha^{[i]}_{\max}},\quad
\delta^{[i]}_{\alpha,\max} = \alpha^{[i]}_{\max}-\frac{\tau\alpha^{[i]}_{\max}}{2\tau+d_{\alpha}\alpha^{[i]}_{\max}},\\
&\delta_{\mu,\min} \triangleq \frac{\tau}{\epsilon+d_{\mu}},\quad
\delta_{\mu,\max} = \Delta_{\mu},\\
&\delta_{\lambda,\min} \triangleq \frac{\tau}{\epsilon+d_{\lambda}},\quad
\delta_{\lambda,\max} = \Delta_{\lambda},\\
Y&\triangleq\{\eta \in \real^n \; | \; \beta^{[i]}_{\kappa}\in [\delta^{[i]}_{\beta,\min},\delta^{[i]}_{\beta,\max}],\quad\alpha^{[i]}_{\kappa\kappa'}\in [\delta^{[i]}_{\alpha,\min},\delta^{[i]}_{\alpha,\max}],\\
&\mu_{\ell}\in[\delta_{\mu,\min},\delta_{\mu,\max}],\quad
\lambda^{[i]}_{\kappa}\in[\delta_{\lambda,\min},\delta_{\lambda,\max}]\}.\end{align*}

\begin{align*} L_J &\triangleq \sqrt{\big(\sup_{\eta\in Y}\|\frac{d R_1(\eta)}{d \eta}\|\big)^2 + 4\tau^2(\frac{m^2}{\varsigma_G^6(\epsilon,\tau)}+\frac{N^2p^2}{\varsigma_h^6(\epsilon,\tau)})}\\
&\times (\epsilon\min_{i\in V}\{\rho_i,\rho_i'\})^{-2}\|J_N\|D_U^{(1)} + \epsilon\min_{i\in V}\{\rho_i,\rho_i'\}\|J_N\|D_U^{(2)},\\
L_{\Omega} &\triangleq \sup_{\eta\in Y}\|\frac{d\nabla \Omega(\eta)}{d\eta}\|.\end{align*}

\subsubsection{Notations for Section~\ref{sec:main}}

We associate the incidence matrix $A \in \real^{|\mathbb{S}|\times2|\mathbb{S}|}$ for the graph $\GG_{\mathbb{S}}$. In particular, the $\kappa$-th row is assigned to state $\kappa$, and is in the form of $[a_{\kappa 1} \cdots a_{\kappa |\mathbb{S}|},\; a_{1 \kappa} \cdots a_{|\mathbb{S}| \kappa}]$. If $\kappa'\in \NN_{\kappa}$, then $a_{\kappa\kappa'} = -1$; if $\kappa'\in \NN_{\kappa}$, then $a_{\kappa'\kappa} = 1$; $a_{\kappa\kappa'} = 0$, otherwise.

\begin{align*}&\Lambda_i \triangleq \{\lambda^{[i]}\in\real^p \; | \; \lambda^{[i]}_{\kappa}\in[0,\Delta_{\lambda}],\quad \forall \kappa \in \mathbb{S}\},\\
&M \triangleq \{\mu\in\real^m \; | \; \mu_{\kappa}\in[0,\Delta_{\mu}],\quad \forall \kappa \in \mathbb{S}\}.\end{align*}

\subsection{An instrumental result}

The following lemma shows that the infinite-horizon averages of two functions are identical if two functions asymptotically approach to each other.

\begin{lemma} Consider the functions $f, g : \real_{\geq0} \rightarrow \real$ which are uniformly bounded. If $\lim_{t\rightarrow+\infty}\|f(t)-g(t)\| = 0$, then it holds that $\lim_{T\rightarrow+\infty}\|\bar{f}(T)-\bar{g}(T)\|=0$, where
$\bar{f}(T) = \frac{\int_0^Tf(t)dt}{T}$ and $\bar{g}(T) = \frac{\int_0^Tg(t)dt}{T}$.
\label{lem3}\end{lemma}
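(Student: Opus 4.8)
The plan is to reduce the whole statement to a single scalar running average by setting $d(t) \triangleq f(t) - g(t)$. By the triangle inequality for integrals,
\begin{align*}
\|\bar{f}(T) - \bar{g}(T)\| = \frac{1}{T}\Big\|\int_0^T d(t)\,dt\Big\| \leq \frac{1}{T}\int_0^T \|d(t)\|\,dt,
\end{align*}
so it suffices to prove that $\frac{1}{T}\int_0^T \|d(t)\|\,dt \to 0$ as $T\to+\infty$. Since $f$ and $g$ are uniformly bounded, there is $M>0$ with $\|d(t)\|\leq M$ for all $t\geq0$; the integrals defining $\bar f,\bar g$ (and hence the one above) are well defined under the standing assumption that $f,g$ are locally integrable. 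Note that this reduction works verbatim even if the values are read as vectors, which is why I retain the norm notation throughout.

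First I would fix $\varepsilon>0$ and invoke the hypothesis $\lim_{t\to+\infty}\|d(t)\|=0$ to pick $T_0>0$ with $\|d(t)\|<\varepsilon/2$ for all $t\geq T_0$. Then, for any $T>T_0$, I would split the running average at the threshold $T_0$:
\begin{align*}
\frac{1}{T}\int_0^T \|d(t)\|\,dt = \frac{1}{T}\int_0^{T_0}\|d(t)\|\,dt + \frac{1}{T}\int_{T_0}^T \|d(t)\|\,dt.
\end{align*}
The tail term is bounded by $\frac{1}{T}(T-T_0)\frac{\varepsilon}{2}\leq \frac{\varepsilon}{2}$ by the choice of $T_0$, while the head term is at most $\frac{M T_0}{T}$ using $\|d(t)\|\leq M$. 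Since $T_0$ and $M$ are now fixed, $\frac{M T_0}{T}\to0$, so there is $T_1\geq T_0$ with $\frac{M T_0}{T}<\frac{\varepsilon}{2}$ for all $T>T_1$. Combining the two bounds yields $\frac{1}{T}\int_0^T\|d(t)\|\,dt<\varepsilon$ for every $T>T_1$, and as $\varepsilon$ was arbitrary the running average tends to $0$, which establishes the claim.

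This is the continuous analogue of the fact that Cesàro means inherit limits, so I do not expect a genuine obstacle. The only point requiring care is the order of the quantifiers: $T_0$ must be chosen from the convergence hypothesis \emph{before} $T_1$ is chosen to annihilate the now-fixed head contribution $M T_0/T$. Uniform boundedness enters solely to control the finite head integral through $\int_0^{T_0}\|d(t)\|\,dt\leq M T_0$; without it the head could conceivably grow faster than $T$ and the argument would fail. No property of $f$ and $g$ beyond boundedness, local integrability, and asymptotic agreement is used.
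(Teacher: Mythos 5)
Your proof is correct and follows essentially the same route as the paper: both split the averaged integral at a threshold chosen from the convergence hypothesis, bound the tail by $\varepsilon$ (you use $\varepsilon/2$), and let the uniformly bounded head contribution vanish as $T\rightarrow+\infty$ (the paper dispatches it via a $\limsup$ rather than an explicit $T_1$, a purely cosmetic difference). The only minor observation is that you state the local-integrability assumption explicitly, which the paper leaves implicit.
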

\begin{proof} Pick any $\varepsilon > 0$, there is $K\geq0$ such that $\|f(t)-g(t)\|\leq\varepsilon$ for all $t\geq K$. Then the following holds for any $T\geq K$ \begin{align}\|\bar{f}(T)-\bar{g}(T)\| &\leq \frac{\int_0^K\|f(t)-g(t)\|dt}{T}+\frac{\int_K^T\|f(t)-g(t)\|dt}{T}\nnum\\
&\leq \frac{\int_0^K\|f(t)-g(t)\|dt}{T}+\frac{\varepsilon(T-K)}{T}.\label{e42}\end{align}

Recall that $f,g$ are uniformly bounded. Take the limit on $T$ in~\eqref{e42}, and it renders that \begin{align}\limsup_{T\rightarrow+\infty}\|\bar{f}(T)-\bar{g}(T)\| \leq \varepsilon.\label{e43}\end{align} Since~\eqref{e43} holds for any $\varepsilon>0$, we then reach the desired result.
\end{proof}

\bibliographystyle{plain}
\bibliography{ZhuFrazzoliACC13}

\end{document}